\newtheorem{theorem}{Theorem}[section]
\newtheorem{lemma}[theorem]{Lemma}
\newtheorem{remark}[theorem]{Remark}
\newtheorem{corollary}[theorem]{Corollary}
\newtheorem{proposition}[theorem]{Proposition}
\newtheorem{definition}[theorem]{Definition}
\newtheorem{example}[theorem]{Example}
\newenvironment{proof}{\trivlist\item[]\rm{\textbf{Proof.}\ }}{\endtrivlist}
      \def\@setcopyright{}
      \def\serieslogo@{}
 \newcommand{\Ima}{\mathrm{Im}}
\newcommand{\Lie}{\ensuremath{\mathsf{Lie}}}
\newcommand{\Leib}{\ensuremath{\mathsf{Leib}}}
\newcommand{\q}{\mathfrak{q}}
\newcommand{\m}{\mathfrak{m}}
\newcommand{\s}{\mathfrak{s}}
\newcommand{\f}{\mathfrak{f}}
\newcommand{\h}{\mathfrak{h}}
\newcommand{\I}{\mathfrak{I}}
\newcommand{\R}{\mathfrak{r}}
\author{N.G. Bell Bogmis, G.R. Biyogmam, H. Safa  and  C. Tcheka \\}
\title{Upper bounds on the dimension of the Schur \Lie-multiplier of \Lie-nilpotent Leibniz $n$-algebras}
\begin{document}
\maketitle


\noindent\textbf{Abstract.}
The Schur  \Lie-multiplier of Leibniz algebras is the Schur multiplier of Leibniz algebras defined relative to the Liezation functor.
In this paper, we study upper bounds for the dimension of the Schur \Lie-multiplier of \Lie-filiform Leibniz $n$-algebras and the Schur \Lie-multiplier of its \Lie-central factor. The upper bound obtained is associated to both the sequences of central binomial coefficients and the sum of the numbers located in the rhombus part of Pascal's triangle.
Also, the pattern of counting the number of \Lie-brackets of a particular  Leibniz $n$-algebra leads us
 to a new property of Pascal's triangle.
   Moreover, we discuss some results which improve the existing upper bound published in \cite{s-b1} for $m$-dimensional \Lie-nilpotent  Leibniz $n$-algebras with $d$-dimensional \Lie-commutator. In particular, it is shown that
 if $\q$ is  an $m$-dimensional \Lie-nilpotent  Leibniz $2$-algebra with one-dimensional \Lie-commutator, then
$\dim\mathcal{M}_\Lie(\q)\leq \frac{1}{2}m(m-1)-1.$  \\

\textbf{2010 MSC:} 17A32, 17B55, 18B99.

\textbf{Key words:} Leibniz $n$-algebra, \Lie-nilpotency, Pascal's triangle, Schur \Lie-multiplier.


\section{Introduction}

The notion of Schur multiplier was introduced by Issai Schur  \cite{s} in 1904 in his study of projective representations of a group. The upper bound of the Schur multiplier plays an important role in the classification of  $p$-groups \cite{grn}
and the characterization of nilpotent Lie algebras  \cite{m}.  So, finding the best possible upper bound of the dimension of the Schur multiplier has been a focus of research in the last few years. Leibniz algebras first appeared in works published in  1965  by Blokh  \cite {Bl}, and  were independently rediscovered by  Jean-Louis Loday \cite{l1, l3} in his study of periodicity phenomena in algebraic $K$-theory. Leibniz algebras are essentially generalization of Lie algebras. Recently, some papers \cite{b-c2, b-c3, safa1,s-b1} have considered studying notions of Leibniz algebras relative to the {\it Liezation functor} $(-)_{\Lie}: {\sf Leib} \to {\sf Lie}$ which assigns the Lie algebra $\q_{\Lie} =\q/\Leib(\q)$ to a given Leibniz algebra $\q$, where $\Leib(\q)=\langle [x,x]|\ x \in \q\rangle$.
Translating properties concerning central extensions and commutators from the absolute case to the relative one has yielded several notions such as the Schur \Lie-multiplier, which was first studied on Leibniz algebras in \cite{c-i} and was used to characterize several notions such as \Lie-nilpotency and
\Lie-stem covers. Leibniz $n$-algebras  were introduced by Casas, Loday and Pirashvili \cite{c-l-p} as  a generalization of Leibniz algebras  to $n$-ary algebras. They also generalize the concept of Filippov algebras  \cite{Fil}  introduced  by Filippov in 1987.  In this paper, we investigate some results obtained in Leibniz algebras and Filippov algebras to Leibniz $n$-algebras. Our main purpose is to  study the upper bound of the Schur \Lie-multiplier of  \Lie-nilpotent Leibniz $n$-algebras which can be useful in classifying  both nilpotent Leibniz $n$-algebras \cite{l3} and nilpotent Filippov algebras. Note that the Schur \Lie-multiplier can be used to provide examples in which properties on Leibniz algebras cannot be extended to Leibniz $n$-algebras.  For instance, the  Schur \Lie-multiplier of a Leibniz algebra is a Lie algebra.
  In fact, a Leibniz algebra is \Lie-abelian if and only if it is a Lie algebra.
  But, this is not true for Leibniz $n$-algebras with $n\geq 3,$ as discussed in \cite[Remark 2.1]{s-b1}.

The remaining of the paper is organized as follows: In Section \ref{Preliminaries}, we present some  preliminaries and generalities, which include the definition of  the notion of Schur \Lie-multiplier of a  Leibniz $n$-algebra, and some immediate results on its dimension.  In Section \ref{Main}, we provide the main results of the paper.  In particular, we prove that the dimension of the Schur \Lie-multiplier of a \Lie-filiform Leibniz $n$-algebra differs from  the dimension of the  Schur \Lie-multiplier of its \Lie-central factor by no more than the number of  \Lie-brackets of
   a particular   Leibniz $n$-algebra.  The upper bound obtained is associated to  the sequences of central binomial coefficients and the sum of the numbers located in the rhombus part of Pascal's triangle. Moreover, the way the \Lie-brackets are counted  yields
  the following relationship between central binomial coefficients and triangular numbers:
$${2n\choose n}=\sum_{i=0}^n {2n-r-1-i\choose n-r-1}{i+r\choose r},$$
 for every $1\leq r\leq n-1$, which is actually a new property of Pascal's triangle (Remark \ref{rem1}).
 Next, we discuss some inequalities on the dimension of the Schur \Lie-multiplier of \Lie-nilpotent Leibniz $n$-algebras.
 As a result, it is shown that  for an $m$-dimensional \Lie-nilpotent  Leibniz $n$-algebra $\q$ whose \Lie-commutator is of dimension
$d$, its \Lie-multiplier dimension is bounded as
\begin{equation}\label{first}
    \dim\mathcal{M}_\Lie(\q)\leq {m-d+n-1\choose n}+ d {m-d+n-2\choose n-1}-d,
\end{equation}
which improves $\dim\mathcal{M}_\Lie(\q)\leq\sum_{i=1}^{n} {{n-1}\choose{i-1}}{m\choose i}-d$, given in \cite{s-b1}.
Furthermore, using another technique we prove that if $\q$ is  an $m$-dimensional \Lie-nilpotent
  Leibniz $2$-algebra with one-dimensional \Lie-commutator, then
$$\dim\mathcal{M}_\Lie(\q)\leq \frac{1}{2}m(m-1)-1,$$
which  improves  the upper bound (\ref{first}) (when $n=2$ and $d=1$),
and  also the upper bound   $\dim\mathcal{M}_\Lie(\q)\leq \frac{1}{2}m(m+1)-d$,  obtained in \cite{s-b1}  (when $d=1$).


\section{Preliminaries}\label{Preliminaries}

Throughout the paper, $n\geq 2$ is a fixed  integer and all Leibniz $n$-algebras are
considered over a fixed field $\mathbb{K}$ of characteristic zero.
 Recall that a Leibniz algebra \cite{l1,l3} is a vector space $\q$  equipped with a bilinear map $[-,-]: \q \times \q \to \q$, usually called the Leibniz bracket of ${\q}$,  satisfying the Leibniz identity
$$[x,[y,z]]= [[x,y],z]-[[x,z],y],$$
 for every $x, y, z \in \q$.
A Leibniz $n$-algebra \cite{c-l-p} is a vector space $\q$ together with an
$n$-linear map $[-,\ldots,-]:\q\times\cdots\times\q\to  \q$,
satisfying the following identity:
\[[[x_1,\ldots,x_n],y_2,\ldots,y_n]=\sum_{i=1}^{n} [x_1,\ldots,x_{i-1},[x_i,y_2,\ldots,y_n],x_{i+1},\ldots,x_n],\]
for all $x_i,y_j\in \q$, $1\leq i\leq n$ and $2\leq j\leq n$.
A subspace $\h$ of a Leibniz $n$-algebra $\q$ is called a subalgebra, if $[x_1,\ldots,x_n]\in\h$ for any $x_i\in\h$.
Also, a subalgebra $\I$ of $\q$ is said to be an ideal, if the brackets $[\I,\q,\ldots,\q]$, $[\q,\I,\q,\ldots,\q]$,
$\ldots$ ,  $[\q,\ldots,\q,\I]$ are all contained in $\I$.

Let $\q$ be a Leibniz $n$-algebra. We define the bracket
$[-,\ldots,-]_{\Lie}:\q\times\cdots\times\q\to \q$ as
 $$[x_1,\ldots,x_n]_{\Lie}=\sum_{\sigma\in S_n}[x_{\sigma(1)},\ldots,x_{\sigma(n)}],$$
  where $S_n$ is the symmetric group
   of degree $n$. In other words
\begin{align*}
 [x_1,\ldots,x_n]_{\Lie}&=[(x_1 +\cdots+x_n),\ldots,(x_1 +\cdots+x_n)]\\
 &-\sum_{1\leq i_j\leq n}[x_{i_1},\ldots,x_{i_j},\ldots,x_{i_j},\ldots,x_{i_n}].
\end{align*}
 Note that the above sigma is the sum of all
possible brackets in which {\it at least} two arguments coincide. More precisely, this sum
is  $\sum_{\theta}[x_{\theta(1)},\ldots,x_{\theta(n)}]$
  where $\theta$ rides over the set of all non-injective maps $\theta:\{1,\ldots,n\}\to \{1,\ldots,n\}$ (see \cite{s-b1,s-b} for more details).
In this approach, we actually consider the relative notion of  commutator when the semi-abelian category \cite{c-v}
is the category ${\sf _nLeib}$ of Leibniz
$n$-algebras and its Birkhoff subcategory \cite{c-i} is ${\sf _nLie},$ the category of $n$-Lie algebras, together with the "Liezation-like" functor
 $(-)_{\Lie}:  {\sf _nLeib} \to  {\sf _nLie}$ which assigns the $n$-Lie algebra $\q/{\sf _nLeib(\q)}$ to a given Leibniz  $n$-algebra $\q,$ where
 $${\sf _nLeib(\q)}=\left\langle [x_1,\ldots,x_i,\ldots,x_j,\ldots, x_n]~|~\exists i,j : x_i=x_j~\mbox{with}~ x_1,\ldots, x_n\in\q \right\rangle.$$

The $\Lie$-center and  $\Lie$-commutator of a Leibniz $n$-algebra $\q$ are defined as
$Z_{\Lie}(\q)=\{x\in\q| \ [x,\q,\ldots,\q]_\Lie=0\}$  and
$\q_{\Lie}^2=[\q,\ldots,\q]_\Lie=\langle [x_1,\ldots,x_n]_{\Lie} |\ x_i\in \q\rangle$, repectively. One may easily check that these  are  ideals of $\q$
(see \cite[Remark 2.1]{s-b}).
 A Leibniz $n$-algebra $\q$ is \Lie-abelian if $\q_{\Lie}^2=0$ or equivalently $Z_{\Lie}(\q)=\q$.
Also, $\q$ is \Lie-nilpotent of class $c$ if $\q_{\Lie}^{c+1}=0$ and $\q_{\Lie}^{c}\not=0$, where $\q_{\Lie}^1=\q$,
 and  $\q_{\Lie}^{i}=[\q_{\Lie}^{i-1},\q,\ldots,\q]_\Lie$ for $i\geq 2$. 

 \begin{remark}\normalfont
 In \cite{s-b1} the $\Lie$-commutator $[\q,\ldots,\q]_\Lie$  is denoted by $\q_{\Lie}^n$, since
  we did not deal with the concept of \Lie-nilpotency there.
 \end{remark}

Let $0\rightarrow\R\rightarrow\f\rightarrow\q\rightarrow 0$  be a free presentation of a Leibniz $n$-algebra $\q$
(see \cite{cas}). Following \cite{s-b1} the  Schur \Lie-multiplier
of $\q$ is defined as
\[\mathcal{M}_\Lie(\q)=\dfrac{\R\cap\f_\Lie^2}{[\R,\f,\ldots,\f]_\Lie}.\]
Clearly, the Schur \Lie-multiplier is  independent of
the chosen free presentation of  $\q$.
 If $n=2$, then $\mathcal{M}_\Lie(\q)=(\R\cap[\f,\f]_\Lie)/[\R,\f]_\Lie$ is  the Schur \Lie-multiplier of a Leibniz algebra
  given in \cite{c-i}.
Three other generalizations of the Schur multiplier, namely the $c$-nilpotent Schur \Lie-multiplier of a Leibniz algebra,
 the Schur \Lie-multiplier of a pair of Leibniz algebras, and the Schur multiplier of a pair of Leibniz algebras
  have already been studied in \cite{b-c3,safa1,b-s}, respectively. \\


The following results are needed in the next section.

\begin{lemma}\cite[Corollary 4.3]{s-b1}\label{lem1}
Let  $\q\cong \f/\R$ be  a finite dimensional Leibniz $n$-algebra,
 $\I$ be  an ideal of $\q$ and $\s$ be an ideal of $\f$ such that $\I\cong\s/\R$.  Then
\begin{itemize}
\item[$(i)$] $\dim\mathcal{M}_\Lie(\dfrac{\q}{\I}) \leq \dim\mathcal{M}_\Lie(\q) + \dim\dfrac{\I \cap\q_\Lie^2}{[\I,\q,\ldots,\q]_\Lie}$,
\item[$(ii)$] $\dim\mathcal{M}_\Lie(\q) + \dim(\I \cap\q_\Lie^2) = \dim\mathcal{M}_\Lie(\dfrac{\q}{\I}) +
\dim\dfrac{[\s,\f,\ldots,\f]_\Lie}{[\R,\f,\ldots,\f]_\Lie}$,
\item[$(iii)$] $\dim\mathcal{M}_\Lie(\q) + \dim\q_\Lie^2 = \dim\dfrac{\f_\Lie^2}{[\R,\f,\ldots,\f]_\Lie}$,
\item[$(iv)$] if $\I\subseteq Z_{\Lie}(\q)$, then
\begin{eqnarray*}
\dim\mathcal{M}_\Lie(\q) + \dim(\I \cap\q_\Lie^2) &\leq& \dim\mathcal{M}_\Lie(\dfrac{\q}{\I}) +
\dim\big{(}\I\otimes^{n-1}\dfrac{\q}{_n{\Leib(\q)}}\big{)}\\
&\leq&\dim\mathcal{M}_\Lie(\dfrac{\q}{\I}) +
\dim\big{(}\I\otimes^{n-1}\dfrac{\q}{\q_\Lie^2}\big{)},
\end{eqnarray*}
where  $\I\otimes^{n-1}\bar{\q}=\I\otimes \bar{\q}\otimes\cdots\otimes \bar{\q}$ including $(n-1)$-times $\bar{\q}$.
\end{itemize}
\end{lemma}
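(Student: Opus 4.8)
The plan is to work entirely inside the fixed free presentation $0\to\R\to\f\to\q\to 0$ and to exploit the induced presentation $0\to\s\to\f\to\q/\I\to 0$ of the quotient, which is legitimate because $\R\subseteq\s$ and $\q/\I\cong(\f/\R)/(\s/\R)\cong\f/\s$. Every claim then reduces to a dimension count among the nested subspaces $[\R,\f,\ldots,\f]_\Lie\subseteq[\s,\f,\ldots,\f]_\Lie\subseteq\f_\Lie^2$ and $\R\cap\f_\Lie^2\subseteq\s\cap\f_\Lie^2\subseteq\f_\Lie^2$. First I would dispatch $(iii)$, which is the structural backbone: the projection $\f\to\q$ restricts to a surjection $\f_\Lie^2\to\q_\Lie^2$ with kernel $\R\cap\f_\Lie^2$, and passing to the quotient by $[\R,\f,\ldots,\f]_\Lie$ produces the short exact sequence
\[0\to\mathcal{M}_\Lie(\q)\to\frac{\f_\Lie^2}{[\R,\f,\ldots,\f]_\Lie}\to\q_\Lie^2\to 0,\]
whose additivity of dimension is precisely $(iii)$.

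For $(ii)$ I would first identify $\I\cap\q_\Lie^2\cong(\s\cap\f_\Lie^2)/(\R\cap\f_\Lie^2)$, using the modular law (valid since $\R\subseteq\s$) to write $\s\cap(\f_\Lie^2+\R)=(\s\cap\f_\Lie^2)+\R$, together with $(\s\cap\f_\Lie^2)\cap\R=\R\cap\f_\Lie^2$. Then, reading off $\mathcal{M}_\Lie(\q/\I)=(\s\cap\f_\Lie^2)/[\s,\f,\ldots,\f]_\Lie$ from the presentation $\f/\s$, both sides of $(ii)$ collapse to $\dim(\s\cap\f_\Lie^2)-\dim[\R,\f,\ldots,\f]_\Lie$ after the telescoping terms $\dim(\R\cap\f_\Lie^2)$ and $\dim[\s,\f,\ldots,\f]_\Lie$ cancel. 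Part $(i)$ is then a corollary of $(ii)$: the image of $[\s,\f,\ldots,\f]_\Lie$ under $\f\to\q$ is $[\I,\q,\ldots,\q]_\Lie$, so $[\I,\q,\ldots,\q]_\Lie\cong[\s,\f,\ldots,\f]_\Lie/([\s,\f,\ldots,\f]_\Lie\cap\R)$ is a quotient of $[\s,\f,\ldots,\f]_\Lie/[\R,\f,\ldots,\f]_\Lie$ (because $[\R,\f,\ldots,\f]_\Lie\subseteq[\s,\f,\ldots,\f]_\Lie\cap\R$). Hence $\dim[\I,\q,\ldots,\q]_\Lie\leq\dim\frac{[\s,\f,\ldots,\f]_\Lie}{[\R,\f,\ldots,\f]_\Lie}$, and substituting this into $(ii)$, together with $[\I,\q,\ldots,\q]_\Lie\subseteq\I\cap\q_\Lie^2$, yields $(i)$.

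The crux is $(iv)$, where the hypothesis $\I\subseteq Z_\Lie(\q)$ is what does the work. I would construct a natural multilinear map
\[\I\otimes^{n-1}\frac{\q}{{}_n\Leib(\q)}\longrightarrow\frac{[\s,\f,\ldots,\f]_\Lie}{[\R,\f,\ldots,\f]_\Lie}\]
sending $\bar{x}\otimes\bar{y}_2\otimes\cdots\otimes\bar{y}_n$ to the class of $[s,f_2,\ldots,f_n]_\Lie$ for arbitrary lifts $s\in\s$ of $x$ and $f_i\in\f$ of $y_i$. The hard part, and the place I expect resistance, is well-definedness: I anticipate that centrality of $\I$ forces this bracket to be additive and balanced in each slot modulo $[\R,\f,\ldots,\f]_\Lie$ and independent of the chosen lifts, and that the generators of ${}_n\Leib(\q)$ (the non-injective brackets) are carried into $[\R,\f,\ldots,\f]_\Lie$, so that the map factors through the $(n-1)$-fold tensor power of the Liezation. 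Surjectivity is immediate, since such brackets generate $[\s,\f,\ldots,\f]_\Lie$ modulo $[\R,\f,\ldots,\f]_\Lie$, giving $\dim\frac{[\s,\f,\ldots,\f]_\Lie}{[\R,\f,\ldots,\f]_\Lie}\leq\dim(\I\otimes^{n-1}\frac{\q}{{}_n\Leib(\q)})$, which combined with $(ii)$ is the first inequality of $(iv)$.

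The second inequality of $(iv)$ should be formal. Each generator $[x_1,\ldots,x_n]_\Lie$ of $\q_\Lie^2$ lies in ${}_n\Leib(\q)$, since in the expansion $[x_1,\ldots,x_n]_\Lie=[(x_1+\cdots+x_n),\ldots,(x_1+\cdots+x_n)]-\sum[x_{i_1},\ldots,x_{i_n}]$ both the all-equal term and the non-injective sum are Leibniz-type; hence $\q_\Lie^2\subseteq{}_n\Leib(\q)$, so $\frac{\q}{{}_n\Leib(\q)}$ is a quotient of $\frac{\q}{\q_\Lie^2}$, and right-exactness of $\otimes$ gives $\dim(\I\otimes^{n-1}\frac{\q}{{}_n\Leib(\q)})\leq\dim(\I\otimes^{n-1}\frac{\q}{\q_\Lie^2})$, completing $(iv)$.
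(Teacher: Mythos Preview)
The paper does not supply its own proof of this lemma: it is quoted verbatim as \cite[Corollary~4.3]{s-b1} and used as a black box. So there is no in-paper argument to compare against; your write-up is in effect a reconstruction of the proof from the cited source.

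That said, your reconstruction is the standard one and is essentially correct. Part $(iii)$ via the short exact sequence, part $(ii)$ via the modular-law identification $\I\cap\q_\Lie^2\cong(\s\cap\f_\Lie^2)/(\R\cap\f_\Lie^2)$ and the telescoping dimension count, and the deduction of $(i)$ from $(ii)$ by comparing $[\s,\f,\ldots,\f]_\Lie/[\R,\f,\ldots,\f]_\Lie$ with its image $[\I,\q,\ldots,\q]_\Lie$ are all exactly how these statements are proved in the literature. Your argument for the second inequality in $(iv)$, via $\q_\Lie^2\subseteq{}_n\Leib(\q)$, is also clean and correct.

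The one soft spot you yourself flag is the well-definedness of the multilinear map in $(iv)$, specifically the factoring through $\q/{}_n\Leib(\q)$ in each of the last $n-1$ slots. Independence of lifts is fine (any change of lift lands in $\R$, and the symmetry of the $\Lie$-bracket puts the resulting term in $[\R,\f,\ldots,\f]_\Lie$), but the claim that a slot entry lying in ${}_n\Leib(\q)$ forces the $\Lie$-bracket into $[\R,\f,\ldots,\f]_\Lie$ needs an actual computation, not just the centrality hypothesis. This is where the proof in \cite{s-b1} does the real work (via an explicit Leibniz-type identity for the symmetrised bracket), and in a fully written proof you would need to supply that step rather than anticipate it.
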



\begin{theorem}\cite[Corollary 4.12]{s-b1}\label{th0}
Let $\q$ be an $m$-dimensional Leibniz $n$-algebra. Then
\[\dim\mathcal{M}_\Lie(\q)\leq\sum_{i=1}^{n} {{n-1}\choose{i-1}}{m\choose i}-\dim\q_\Lie^2.\]
\end{theorem}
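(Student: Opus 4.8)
The plan is to reduce the statement, via the identity in Lemma \ref{lem1}(iii), to a purely combinatorial count of $\Lie$-brackets of generators in a free presentation. First I would fix a free presentation $0\to\R\to\f\to\q\to 0$ in which $\f$ is the free Leibniz $n$-algebra on generators $e_1,\ldots,e_m$ whose images $\bar e_1,\ldots,\bar e_m$ form a basis of $\q$; such a presentation exists because $\dim\q=m$ forces $\q$ to be generated by $m$ elements, and $\mathcal{M}_\Lie(\q)$ does not depend on the chosen presentation. By Lemma \ref{lem1}(iii),
$$\dim\mathcal{M}_\Lie(\q)=\dim\frac{\f_\Lie^2}{[\R,\f,\ldots,\f]_\Lie}-\dim\q_\Lie^2,$$
so it suffices to prove the bound $\dim\big(\f_\Lie^2/[\R,\f,\ldots,\f]_\Lie\big)\leq\sum_{i=1}^{n}\binom{n-1}{i-1}\binom{m}{i}$.

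Next I would show that the quotient $\f_\Lie^2/[\R,\f,\ldots,\f]_\Lie$ is spanned by the images of the brackets $[e_{i_1},\ldots,e_{i_n}]_\Lie$ of generators. Since $\f_\Lie^2$ is spanned by the brackets $[f_1,\ldots,f_n]_\Lie$ with $f_j\in\f$, and since $\bar e_1,\ldots,\bar e_m$ is a basis of $\q=\f/\R$, each $f_j$ may be written as $f_j=g_j+r_j$ with $g_j\in\mathrm{span}_\mathbb{K}(e_1,\ldots,e_m)$ and $r_j\in\R$. Expanding $[f_1,\ldots,f_n]_\Lie$ by multilinearity, every summand containing at least one argument $r_j\in\R$ lies in $[\R,\f,\ldots,\f]_\Lie$; this is exactly where the full symmetry of the $\Lie$-bracket is used, as it lets any relator slot be moved into the first position. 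Hence modulo $[\R,\f,\ldots,\f]_\Lie$ one has $[f_1,\ldots,f_n]_\Lie\equiv[g_1,\ldots,g_n]_\Lie$, and a further multilinear expansion expresses this as a linear combination of the $[e_{i_1},\ldots,e_{i_n}]_\Lie$.

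It then remains to count these spanning brackets. Because $[x_1,\ldots,x_n]_\Lie=\sum_{\sigma\in S_n}[x_{\sigma(1)},\ldots,x_{\sigma(n)}]$ is invariant under permutation of its arguments, the bracket $[e_{i_1},\ldots,e_{i_n}]_\Lie$ depends only on the multiset $\{i_1,\ldots,i_n\}$. The number of size-$n$ multisets drawn from $\{1,\ldots,m\}$ is $\binom{m+n-1}{n}$, and grouping these multisets by the number $i$ of distinct generators they involve (choose the $i$ generators in $\binom{m}{i}$ ways, then distribute the $n$ slots among them as a composition of $n$ into $i$ positive parts, of which there are $\binom{n-1}{i-1}$) gives $\binom{m+n-1}{n}=\sum_{i=1}^{n}\binom{n-1}{i-1}\binom{m}{i}$. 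Combining this with the displayed identity yields the claimed inequality.

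The step I expect to demand the most care is the spanning argument, and in particular making rigorous that each error term produced when replacing $f_j$ by $g_j$ genuinely lands in $[\R,\f,\ldots,\f]_\Lie$: this rests on the symmetry of the $\Lie$-bracket together with $\f_\Lie^2$ being the linear span of the $\Lie$-brackets rather than the ideal they generate, so I would first confirm that this is the operative meaning of $\q_\Lie^2$ here. The combinatorial identity, although it supplies the precise closed form, is then routine.
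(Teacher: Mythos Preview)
Your proposal is correct and follows essentially the same route as the paper's own argument (given in Proposition~\ref{prop1}): use Lemma~\ref{lem1}(iii) to reduce to bounding $\dim\big(\f_\Lie^2/[\R,\f,\ldots,\f]_\Lie\big)$, observe by symmetry of the $\Lie$-bracket that this quotient is spanned by the brackets $[e_{i_1},\ldots,e_{i_n}]_\Lie$ with $i_1\le\cdots\le i_n$, and count these as $\binom{m+n-1}{n}=\sum_{i=1}^{n}\binom{n-1}{i-1}\binom{m}{i}$. The paper obtains the last identity via Vandermonde rather than your composition count, and is terser about the spanning step, but the substance is the same.
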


\begin{theorem}\cite[Corollary 4.10]{s-b1}\label{theorem2.4}
Let $\q$ be an $m$-dimensional Leibniz $2$-algebra. Then
\[\dim\mathcal{M}_\Lie(\q)\leq\frac{1}{2}m(m+1)-\dim\q_\Lie^2.\]
Also, $\dim\mathcal{M}_\Lie(\q)=\frac{1}{2}m(m+1)$ if and only if $\q$ is \Lie-abelian.
\end{theorem}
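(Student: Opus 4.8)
The plan is to obtain the inequality first and then extract the equality case from it. Fix a free presentation $0\to\R\to\f\to\q\to0$ with $\f$ the free Leibniz $2$-algebra on generators $x_1,\dots,x_m$ lifting a basis of $\q$. By Lemma \ref{lem1}$(iii)$ one has $\dim\mathcal{M}_\Lie(\q)+\dim\q_\Lie^2=\dim\big(\f_\Lie^2/[\R,\f]_\Lie\big)$, so it suffices to bound this quotient by $\tfrac12 m(m+1)$. If $a\equiv\bar a$ and $b\equiv\bar b$ modulo $\R$ with $\bar a,\bar b$ linear in the generators, then, using the symmetry of the $2$-ary \Lie-bracket, $[a,b]_\Lie-[\bar a,\bar b]_\Lie=[a-\bar a,b]_\Lie+[\bar a,b-\bar b]_\Lie\in[\R,\f]_\Lie$; hence modulo $[\R,\f]_\Lie$ the space $\f_\Lie^2$ is spanned by the symmetrized brackets $[x_i,x_j]_\Lie=[x_i,x_j]+[x_j,x_i]$. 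As these are symmetric in $i,j$, only the $\binom{m+1}{2}=\tfrac12 m(m+1)$ brackets with $i\le j$ are needed, which yields the bound (this also recovers the case $n=2$ of Theorem \ref{th0}).

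For the equality the forward implication is immediate: if $\dim\mathcal{M}_\Lie(\q)=\tfrac12 m(m+1)$, the inequality forces $\dim\q_\Lie^2=0$, so $\q$ is \Lie-abelian. For the converse, suppose $\q$ is \Lie-abelian, so $\q_\Lie^2=0$ and each $[x_i,x_j]_\Lie$ maps to $0$ in $\q$, i.e. lies in $\R\cap\f_\Lie^2$; then $\dim\mathcal{M}_\Lie(\q)=\dim\big(\f_\Lie^2/[\R,\f]_\Lie\big)$, and the task reduces to showing that the $\binom{m+1}{2}$ spanning classes $[x_i,x_j]_\Lie$ ($i\le j$) are linearly \emph{independent} in the quotient. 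The delicate point is that $[\R,\f]_\Lie$ need not be homogeneous for the bracket-length grading: a relation $[x_i,x_j]-\sum_k c^k_{ij}x_k\in\R$ carries a degree-one tail built from the structure constants of $\q$, so that brackets such as $[x_k,x_l]_\Lie$ become identified modulo $[\R,\f]_\Lie$ with genuinely higher-degree elements rather than being killed outright. Consequently a naive grading argument, which would succeed instantly were $\q$ honestly abelian, is not available here.

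To establish the independence I would try one of two routes. The first is to construct an explicit \Lie-central extension $0\to A\to\mathcal{E}\to\q\to0$ with $\dim A=\binom{m+1}{2}$ and $A\subseteq\mathcal{E}_\Lie^2\cap Z_\Lie(\mathcal{E})$; being a \Lie-stem extension it satisfies $\dim A\le\dim\mathcal{M}_\Lie(\q)$, giving the matching lower bound. The second is a direct argument that no nontrivial symmetric combination $\sum_{i\le j}c_{ij}[x_i,x_j]_\Lie$ lies in $[\R,\f]_\Lie$, obtained by tracking the inhomogeneous identifications all the way up through the degrees. I expect this to be the main obstacle. In the first route the Leibniz identity constrains the admissible cocycle on the symmetric square once $\q$ is nonabelian (though still \Lie-abelian), so the extension must be built with care; in the second, one must control the structure-constant-laden tails of $[\R,\f]_\Lie$ and rule out that they conspire to produce a degree-two relation among the symmetrized brackets. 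Either way, the crux is proving that the nonabelian part of a \Lie-abelian (that is, Lie) structure on $\q$ cannot lower the dimension below $\tfrac12 m(m+1)$.
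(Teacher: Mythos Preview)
The paper does not supply its own proof of this statement: it is quoted verbatim as \cite[Corollary 4.10]{s-b1} in the Preliminaries section, with no argument given. What the paper \emph{does} prove is the inequality in full generality for Leibniz $n$-algebras (Proposition~\ref{prop1}), and that argument is exactly yours: invoke Lemma~\ref{lem1}(iii) to reduce to bounding $\dim(\f_\Lie^2/[\R,\f,\ldots,\f]_\Lie)$, then observe that the symmetry of the \Lie-bracket means only the $\binom{m+n-1}{n}$ non-decreasing index tuples are needed. For $n=2$ this is $\binom{m+1}{2}=\tfrac12 m(m+1)$. So on the inequality your approach coincides with the paper's.

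On the equality characterization, the forward implication is immediate as you note, but the reverse implication is left as a sketch in your proposal, and you explicitly flag it as the main obstacle. Your identification of the difficulty is accurate: for $n=2$, \Lie-abelian means precisely that $\q$ is a Lie algebra (possibly nonabelian), so the relations in $\R$ carry the Lie structure constants and $[\R,\f]_\Lie$ is not obviously graded; thus the linear independence of the classes $[x_i,x_j]_\Lie$ modulo $[\R,\f]_\Lie$ requires work. Neither of your two suggested routes is carried to completion. Since the present paper does not prove this direction either, there is no in-paper proof to compare against; the argument lives in \cite{s-b1} (and ultimately in \cite{c-i} for the $n=2$ case), where the \Lie-multiplier of a Lie algebra is handled directly. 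If you want a self-contained proof, the stem-extension route is the cleaner of your two options.
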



\section{Main results}\label{Main}

This section is devoted to obtain some inequalities on the dimension of the Schur \Lie-multiplier of \Lie-nilpotent Leibniz $n$-algebras.

An $m$-dimensional Leibniz algebra $\q$ is said to be {\it filiform} if
$\dim\q^i=m-i$ for every $2\leq i\leq m$,
and is called {\it nulfiliform}  if $\dim\q^i=m+1-i$ for every $1\leq i\leq m+1$,
 where $\q^i$ is the $i$th term of the lower central series of $\q$ (see \cite{alb} for more details).
The first definition agrees with the notion of filiform Lie algebras \cite{ver}.
Following \cite{safa1}, an $m$-dimensional (non-Lie) Leibniz algebra  $\q$ is  called \Lie-filiform if
$\dim\q_{\Lie}^i=m-i$ for every $2\leq i\leq m$, and is called {\it \Lie-nilpotent of maximal class} if
$\dim\q_{\Lie}^i=m+1-i$ for every $1\leq i\leq m+1$.

\begin{definition}\label{def1} \normalfont
 An $m$-dimensional  Leibniz $n$-algebra $\q$ ($m\geq n$) is called
 filiform (resp. \Lie-filiform) if $\dim \q^i=m-n+2-i$ (resp. $\dim \q_{\Lie}^i=m-n+2-i$) for every $2\leq i\leq m-n+2$.
\end{definition}
Note that the notion of filiform Leibniz $n$-algebras agrees with the definition of filiform $n$-Lie algebras discussed in \cite{goz,safa}.
It is easy to see that a filiform (resp. \Lie-filiform)  Leibniz $n$-algebra
 is nilpotent (resp. \Lie-nilpotent) of class  $m-n+1$. In  $n$-Lie algebras, this nilpotency class is maximal (\cite[Example 3.12]{safa}),
 but if $\q$ is a Leibniz $n$-algebra (which is not an $n$-Lie algebra), then $m-n+1$  is not  the maximal (\Lie-)nilpotency class of
$\q$ (see also \cite[Remark 3.2]{safa1}). In fact, the concept of "maximal class" for Leibniz $n$-algebras should be defined similar to the case where $n=2$.

\begin{definition}\label{def2} \normalfont
An $m$-dimensional  Leibniz $n$-algebra $\q$  is said to be nilpotent (resp. \Lie-nilpotent) of maximal class, if
 $\dim \q^i=m+1-i$ (resp. $\dim \q_{\Lie}^i=m+1-i$) for every $1\leq i\leq m+1$.
\end{definition}

\begin{example}\label{ex1} \normalfont
It is easy to see that $\q$ with a basis $\{x_1,\ldots,x_m\}$ ($m\geq 2$)
and non-zero multiplications $[x_i,x_1,x_1]=x_{i+1}$ for $1\leq i\leq m-1$, is a Leibniz 3-algebra.
Now since
\[[[[x_1,\underbrace{x_1,x_1],x_1,x_1],\ldots,x_1,x_1]}_{m-\text{times}\ x_1,x_1}=[[x_2,x_1,x_1],\ldots,x_1,x_1]=\cdots=[x_m,x_1,x_1]=0\]
and
\[[[[x_1,\underbrace{x_1,x_1],x_1,x_1],\ldots,x_1,x_1]}_{(m-1)-\text{times}\ x_1,x_1}=[[x_2,x_1,x_1],\ldots,x_1,x_1]=\cdots=x_m\not=0,\]
we get $\q^{m+1}=0$ and hence $\q$ is nilpotent of class $m$. Also, $\{x_i,\ldots,x_m\}$ is a basis for $\q^i$ and hence
 $\dim\q^i=m+1-i$, for every $1\leq i\leq m+1$. Therefore, $\q$ is nilpotent  of maximal class.
 One may easily check that $\q$ is also \Lie-nilpotent  of maximal class.
 Note that
 $$[x_i,x_1,x_1]_\Lie=2[x_i,x_1,x_1]+2[x_1,x_i,x_1]+2[x_1,x_1,x_i]=2x_{i+1},$$
  and hence
 $\{x_i,\ldots,x_m\}$ is a basis for  $\q_{\Lie}^i$, as well.
\end{example}


  The following result is a generalization of \cite[Theorem 3.4]{safa1} (when $\m=\q$), and \cite[Corollary 4.7]{b-c3} (when $c=1$). See also \cite[Remark 3.2]{safa1}.

\begin{theorem}\label{th1}
$(i)$ If $\q$ is  an $m$-dimensional \Lie-filiform Leibniz $n$-algebra $(m>n)$,
then
\[\dim\mathcal{M}_\Lie(\q) =\dim\mathcal{M}_\Lie(\frac{\q}{Z_\Lie(\q)})+k,\]
such that $-1\leq k\leq {2n-2\choose n-1}-1$.

$(ii)$ If $\q$ is a finite dimensional \Lie-nilpotent Leibniz $n$-algebra  of maximal class,
then the above equality holds for $-1\leq k\leq 0$.
\end{theorem}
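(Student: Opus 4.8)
The plan is to apply Lemma \ref{lem1} to the central ideal $\I = Z_\Lie(\q)$ and then to convert the resulting relation into the stated numerical bound by counting independent \Lie-brackets. First I would fix a free presentation $0 \to \R \to \f \to \q \to 0$ and an ideal $\s$ of $\f$ with $\s/\R \cong Z_\Lie(\q)$. Since $\q$ is \Lie-filiform, its lower \Lie-central series satisfies $\dim \q_\Lie^i = m-n+2-i$ for $2 \le i \le m-n+2$; in particular $\q_\Lie^{m-n+1}$ is one-dimensional and, being the last nonzero term of the series, is contained in $Z_\Lie(\q)$. The first substantive step is to establish the reverse inclusion $Z_\Lie(\q) = \q_\Lie^{m-n+1}$, so that $Z_\Lie(\q)$ is one-dimensional; this is the analogue of the classical fact that a filiform algebra has one-dimensional center, and it is here that the \Lie-filiform hypothesis is genuinely used. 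Because $m > n$ forces $m-n+1 \ge 2$, we also obtain $Z_\Lie(\q) \subseteq \q_\Lie^2$, whence $Z_\Lie(\q) \cap \q_\Lie^2 = Z_\Lie(\q)$ has dimension $1$.

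Next, applying Lemma \ref{lem1}$(ii)$ with $\I = Z_\Lie(\q)$ gives
\[ \dim\mathcal{M}_\Lie(\q) - \dim\mathcal{M}_\Lie\left(\frac{\q}{Z_\Lie(\q)}\right) = \dim\frac{[\s,\f,\ldots,\f]_\Lie}{[\R,\f,\ldots,\f]_\Lie} - \dim\left(Z_\Lie(\q) \cap \q_\Lie^2\right), \]
so that $k = \dim\left([\s,\f,\ldots,\f]_\Lie/[\R,\f,\ldots,\f]_\Lie\right) - 1$. It therefore suffices to bound the first term. Writing $\s = \R + \langle \tilde z\rangle$ for a lift $\tilde z$ of a generator of $Z_\Lie(\q)$, the quotient $[\s,\f,\ldots,\f]_\Lie/[\R,\f,\ldots,\f]_\Lie$ is spanned by the classes of the brackets $[\tilde z, f_1, \ldots, f_{n-1}]_\Lie$. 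By the symmetry of the \Lie-bracket, replacing any $f_i$ by an element of $\R$ changes such a bracket by an element of $[\R,\f,\ldots,\f]_\Lie$; and since $\tilde z$ lifts a central element lying in $\q_\Lie^{m-n+1}$, replacing an argument by an element of $\f_\Lie^2$ also leaves the class unchanged. Hence the arguments may be taken from a basis of $\q/\q_\Lie^2$, which has dimension $n$, and the full symmetry of $[-,\ldots,-]_\Lie$ means the spanning set is indexed by multisets of size $n-1$ drawn from $n$ elements. This yields
\[ \dim\frac{[\s,\f,\ldots,\f]_\Lie}{[\R,\f,\ldots,\f]_\Lie} \le \binom{n+(n-1)-1}{n-1} = \binom{2n-2}{n-1}. \]
Since this dimension is also nonnegative, we conclude $-1 \le k \le \binom{2n-2}{n-1} - 1$, proving $(i)$. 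For $(ii)$, the maximal-class hypothesis gives $\dim \q_\Lie^2 = m-1$, hence $\dim \q/\q_\Lie^2 = 1$; the multiset count then collapses to $\binom{n-1}{n-1} = 1$, so the bracket quotient has dimension at most $1$ and we obtain $-1 \le k \le 0$.

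The main obstacle I anticipate is the bracket-reduction step: rigorously justifying that, modulo $[\R,\f,\ldots,\f]_\Lie$, the brackets $[\tilde z, f_1, \ldots, f_{n-1}]_\Lie$ depend only on the images of the $f_i$ in $\q/\q_\Lie^2$ and are governed by the symmetric rather than the full tensor structure. This symmetry reduction is precisely what improves the naive bound $\dim\left(Z_\Lie(\q) \otimes^{n-1} \q/\q_\Lie^2\right) = n^{n-1}$ coming from Lemma \ref{lem1}$(iv)$ to the central binomial coefficient $\binom{2n-2}{n-1}$; carrying it out will require a careful expansion of nested \Lie-brackets via the $n$-Leibniz identity together with the centrality of $\tilde z$ modulo $\R$. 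A secondary obstacle is confirming that $Z_\Lie(\q)$ is exactly one-dimensional, which underlies both the value $\dim(Z_\Lie(\q)\cap\q_\Lie^2) = 1$ and the single choice of central generator $\tilde z$ in the count above.
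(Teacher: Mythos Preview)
Your proposal is correct and follows essentially the same route as the paper: identify $Z_\Lie(\q)$ with the last nonzero term of the lower \Lie-central series (hence one-dimensional, contained in $\q_\Lie^2$), apply Lemma~\ref{lem1}(ii) with $\I=Z_\Lie(\q)$, and bound $\dim\bigl([\s,\f,\ldots,\f]_\Lie/[\R,\f,\ldots,\f]_\Lie\bigr)$ by counting symmetric \Lie-brackets whose last $n-1$ arguments are drawn from a basis of $\q/\q_\Lie^2$. The paper packages this last step as a surjective multilinear map $\psi:Z_\Lie(\q)\times(\q/\q_\Lie^2)^{n-1}\to[\s,\f,\ldots,\f]_\Lie/[\R,\f,\ldots,\f]_\Lie$ (simply asserting it is well defined), which is exactly the reduction you flag as the main obstacle; your concern is appropriate, but the paper treats this point at the same level of detail you do.

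Two minor deviations, both harmless. First, for the lower bound $k\ge -1$ the paper invokes Lemma~\ref{lem1}(i) separately, whereas you extract it from the nonnegativity of the bracket quotient in Lemma~\ref{lem1}(ii); these are equivalent. Second, for part~(ii) the paper obtains $k\le 0$ from Lemma~\ref{lem1}(iv) (the tensor bound $\dim\bigl(\I\otimes^{n-1}\q/\q_\Lie^2\bigr)=1\cdot 1^{n-1}=1$), while you reuse the symmetric multiset count with $\dim(\q/\q_\Lie^2)=1$ to get $\binom{n-1}{n-1}=1$; in this degenerate case the two counts coincide, so there is no real difference. You should, however, still record in part~(ii) that $Z_\Lie(\q)=\q_\Lie^m$ is one-dimensional (the paper argues this exactly as in~(i), using $\dim\q_\Lie^{m-1}=2$), since your formula $k=\dim(\cdots)-1$ relies on $\dim(Z_\Lie(\q)\cap\q_\Lie^2)=1$.
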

\begin{proof}
(i) Since $\q$ is \Lie-filiform, then it is \Lie-nilpotent of class $m-n+1$,  $\dim\q_{\Lie}^{m-n+1}=1$
and $0\not=\q_{\Lie}^{m-n+1}\subseteq Z_{\Lie}(\q)$. Consider $0\not=x\in \q_{\Lie}^{m-n+1}$, $x=[y,z_2,z_3,\ldots,z_n]$ with
$y\in \q_{\Lie}^{m-n}$ and
$z_i\in\q$, $2\leq i\leq n$. Moreover $\dim \q_{\Lie}^{m-n}=2$, thus one has $\q_{\Lie}^{m-n+1}\subseteq Z_{\Lie}(\q)\subsetneq \q_{\Lie}^{m-n}$
since $y\not\in Z_{\Lie}(\q)$. Hence $\q_{\Lie}^{m-n+1}= Z_{\Lie}(\q)$ and $\dim(\q_\Lie^2\cap Z_{\Lie}(\q))=\dim Z_{\Lie}(\q)=1$.
Now by considering Lemma \ref{lem1} (i)  in which $\I=Z_{\Lie}(\q)$, we have
$$\dim\mathcal{M}_\Lie(\dfrac{\q}{Z_{\Lie}(\q)})-1 \leq \dim\mathcal{M}_\Lie(\q).$$
On the other hand, using Lemma \ref{lem1} (ii) we get
$$\dim\mathcal{M}_\Lie(\q)  = \dim\mathcal{M}_\Lie(\dfrac{\q}{Z_{\Lie}(\q)}) +
\dim\dfrac{[\s,\f,\ldots,\f]_\Lie}{[\R,\f,\ldots,\f]_\Lie}-1.$$
It is easy to see that
$$\psi:Z_{\Lie}(\q) \times \frac{\q}{\q_{\Lie}^{2}}\times\cdots\times \frac{\q}{\q_{\Lie}^{2}}\longrightarrow \frac{[\s,\f,\ldots,\f]_\Lie}{[\R,\f,\ldots,\f]_\Lie}$$
given by $\psi(z,\bar{q}_1,\ldots,\bar{q}_{n-1})= [s,f_1,\ldots,f_{n-1}]_\Lie+[\R,\f,\ldots,\f]_\Lie$ where
$\pi(s)=z$ and $\pi(f_i)=q_i$, is a surjective
multilinear map ($\pi:\f\to\q$ is the canonical epimorphism). Since
$\dim Z_{\Lie}(\q)=1$, $\dim (\q/\q_{\Lie}^{2})=n$, and by definition of the \Lie-bracket
 $$[x_1,\ldots,x_i,x_{i+1},\ldots,x_n]_\Lie=[x_1,\ldots,x_{i+1},x_{i},\ldots,x_n]_\Lie,$$
 thus
it is enough to count only the number of \Lie-brackets of the  form
 $[s,f_{i_1},f_{i_2},\ldots,f_{i_{n-1}}]_\Lie$ where $i_1\leq i_2\leq\cdots\leq i_{n-1}$ and $i_j\in\{1,2,\ldots,n\}$ for all
 $1\leq j\leq n-1$. In combinatorics,  the number of non-decreasing sequences  of length $s$
  from $\{1,2,\ldots,t\}$ is ${t+s-1\choose s}$  (see \cite{mar} p.125, here a sequence $\{a_n\}$ is called non-decreasing
  if $a_1\leq a_2\leq a_3\leq \cdots$). So the number of   \Lie-brackets  mentioned above will be ${2n-2 \choose n-1}$. Therefore,
 $\dim \Ima\psi\leq {2n-2\choose n-1}$ which completes the proof of (i).

(ii) Let $\dim\q=m$. As $\dim \q_{\Lie}^i=m+1-i$ for every $1\leq i\leq m+1$, we get $\dim\q_\Lie^m=1$ and $\q_\Lie^m\subseteq Z_{\Lie}(\q)$.
Moreover since $\dim\q_\Lie^{m-1}=2$, then $Z_{\Lie}(\q)=\q_\Lie^m$ and $\dim(\q_\Lie^2\cap Z_{\Lie}(\q))=1$.
Once more we get from Lemma \ref{lem1} (i) and (iv),   the following inequalities
$$\dim\mathcal{M}_\Lie(\dfrac{\q}{Z_{\Lie}(\q)})-1 \leq \dim\mathcal{M}_\Lie(\q),$$
$$\dim\mathcal{M}_\Lie(\q) \leq \dim\mathcal{M}_\Lie(\dfrac{\q}{Z_{\Lie}(\q)}),$$
respectively.
Therefore when $\q$ is \Lie-nilpotent of maximal class, the given equality
 holds for $-1\leq k\leq 0$. \ \ \ $\blacksquare$
\end{proof}

\begin{remark}\label{remark3.5} \normalfont
If  $\q$ is  an $n$-dimensional \Lie-filiform Leibniz $n$-algebra, then $\dim\q_{\Lie}^{2}=0$ i.e. $\q$ is \Lie-abelian and so
$\mathcal{M}_\Lie(\frac{\q}{Z_\Lie(\q)})=0$. Then by Theorem \ref{th0},
 $\dim\mathcal{M}_\Lie(\q)\leq \sum_{i=1}^n {n-1\choose i-1}{n\choose i}$. Also using Proposition \ref{prop1}, we have
  $\dim\mathcal{M}_\Lie(\q)\leq {2n-1\choose n}$.
\end{remark}

It is worth noting that the number of \Lie-brackets ${2n-2\choose n-1}$  ($n\geq 2$) obtained in Theorem \ref{th1} (i) gives the well-known sequence
 $2, 6, 20, 70, 252, \ldots,$
   which are called  {\it central binomial coefficients} (sequence $A000984$ in the  OEIS).
  They are actually the numbers in the central column
  (in the middle of the even-numbered rows) of
  Pascal's triangle\footnote{The pattern of numbers that forms Pascal's triangle was known well before Pascal's time (1623--1662). It had been previously investigated by many other mathematicians, including Italian algebraist $Niccol\grave{o}\ Tartaglia$ (1500--1577),
   Chinese mathematician {\it Yang Hui} (1238--1298),
    and Iranian  mathematician, astronomer and poet {\it Omar Khayyam} (1048--1131)\cite{cool}.
    This is why it is  known as  Tartaglia's triangle in Italy,
    Yang Hui's triangle in China, and  Khayyam-Pascal's triangle in Iran.}.
  Moreover, the sequence obtained by the upper bound ${2n-2\choose n-1} -1$, i.e., $1, 5, 19, 69, 251, \ldots,$
  ($A030662$ in the OEIS)
  has an interesting property.
  The numbers of this sequence are the sum of the numbers located in the rhombus part of Pascal's triangle. For instance, see three of them:
\begin{equation*}
   \begin{tabular}{>{$}l<{$\hspace{12pt}}*{13}{c}}
 &&&&&&&1&&&&&&\\
 &&&&&&1&&1\ = 5&&&&&\\
 &&&&&&&2&&&&&&
\end{tabular}
  \begin{tabular}{>{$}l<{$\hspace{12pt}}*{13}{c}}
 &&&&&&&1&&&&&&\\
 &&&&&&1&&1&&&&&\\
 &&&&&1&&2&&1\ = 19&&&&\\
 &&&&&&3&&3&&&&&\\
 &&&&&&&6&&&&&&
\end{tabular}
\end{equation*}
\begin{equation*}
\begin{tabular}{>{$}l<{$\hspace{12pt}}*{13}{c}}
 &&&&&&&1&&&&&&\\
 &&&&&&1&&1&&&&&\\
 &&&&&1&&2&&1&&&&\\
 &&&&1&&3&&3&&1\  = 69&&&\\
 &&&&&4&&6&&4&&&&\\
 &&&&&&10&&10&&&&&\\
 &&&&&&&20&&&&&&
\end{tabular}
\end{equation*}

Also, the pattern of counting the number of \Lie-brackets led us to a relationship between central binomial coefficients and triangular numbers.
{\it Triangular numbers} are created by adding up the natural numbers:
$$T_n=\sum_{i=1}^n i=\frac{n(n+1)}{2}={n+1\choose 2}.$$
 If we add up triangular numbers, we get the  {\it tetrahedral numbers} (or triangular pyramidal numbers) which can be described by an analogous formula:
$$H_n=\sum_{i=1}^n T_i=\frac{n(n+1)(n+2)}{6}={n+2\choose 3}.$$
This pattern progresses to higher dimensions as follows:
 $P_n^{(1)}=n$,\ $P_n^{(2)}=T_n$,\ $P_n^{(3)}=H_n$ and
$$P_n^{(r)}=\sum_{i=1}^n P_i^{(r-1)}=\frac{n(n+1)(n+2)\cdots(n+r-1)}{r!}={n+r-1\choose r}.$$
The next result is the  relationship mentioned above.
\begin{proposition}\label{prop0}
For every $1\leq r\leq n-1$
\begin{equation}\label{pascal}
{2n\choose n}=\sum_{i=0}^n {2n-r-1-i\choose n-r-1}P_{i+1}^{(r)}.
\end{equation}
\end{proposition}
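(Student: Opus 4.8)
The plan is to recognize the right-hand side of (\ref{pascal}) as a Vandermonde-type convolution and to evaluate it by generating functions. First I would rewrite the summand symmetrically: since $P_{i+1}^{(r)}={i+r\choose r}={i+r\choose i}$ and, by the symmetry ${a\choose b}={a\choose a-b}$ of binomial coefficients, ${2n-r-1-i\choose n-r-1}={2n-r-1-i\choose n-i}$, identity (\ref{pascal}) becomes
\[
{2n\choose n}=\sum_{i=0}^n {i+r\choose i}{2n-r-1-i\choose n-i}.
\]
Writing $s=n-r-1$, which is nonnegative precisely because $r\leq n-1$, the top entry of the second factor equals $s+(n-i)$, so the sum is exactly $\sum_{i=0}^n {r+i\choose i}{s+(n-i)\choose n-i}$, the convolution at index $n$ of the two sequences $k\mapsto{r+k\choose k}$ and $k\mapsto{s+k\choose k}$.

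The key step is the negative binomial series
\[
\sum_{k\geq 0}{m+k\choose k}x^k=\frac{1}{(1-x)^{m+1}}.
\]
Applying this with $m=r$ and with $m=s$ and multiplying the two series, the convolution above is the coefficient of $x^n$ in
\[
\frac{1}{(1-x)^{r+1}}\cdot\frac{1}{(1-x)^{s+1}}=\frac{1}{(1-x)^{r+s+2}}.
\]
Reading off this coefficient from the same series, now with $m=r+s+1$, yields ${r+s+1+n\choose n}$; and since $r+s+1=n$ this is exactly ${2n\choose n}$, the left-hand side of (\ref{pascal}).

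I do not expect a genuine obstacle beyond bookkeeping: one must check that the symmetry reindexing is applied correctly and that no terms outside the range $0\leq i\leq n$ are needed. For $0\leq i\leq n$ one has $2n-r-1-i\geq n-r-1$, so every binomial appearing is a genuine, nonvanishing coefficient, and the hypothesis $1\leq r\leq n-1$ guarantees $s=n-r-1\geq 0$ so that the generating function $(1-x)^{-(s+1)}$ is legitimate. As an alternative to generating functions, the same convolution admits a direct combinatorial proof---counting the $n$-element subsets of an $(r+s+n+1)$-element set by splitting at a chosen position---which also mirrors the \Lie-bracket counting that motivates the statement; but the generating-function argument is the shortest and least error-prone, so I would adopt it.
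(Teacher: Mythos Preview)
Your proof is correct but follows a genuinely different route from the paper's. The paper argues combinatorially: it interprets ${2n\choose n}$ as the number of non-decreasing sequences of length $n$ from $\{1,\ldots,n+1\}$, splits each such sequence into a prefix of length $n-r$ and a suffix of length $r$, and classifies according to the largest value used in the prefix; Pascal's identity then simplifies the resulting count step by step. You instead recognise the right-hand side as the Vandermonde-type convolution $\sum_{i}{r+i\choose i}{s+(n-i)\choose n-i}$ with $s=n-r-1$ and evaluate it in one stroke via the negative binomial series. Your approach is shorter and purely algebraic; the paper's approach, while more laborious, is tied directly to the \Lie-bracket counting in Theorem~\ref{th1}(i) (the non-decreasing sequences are exactly the index tuples of the \Lie-brackets), which explains the authors' choice and is the point you yourself anticipate in your closing remark.
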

\begin{proof} Recall that  the number of non-decreasing sequences  of length $s$
  from $\{1,2,\ldots,t\}$ is ${t+s-1\choose s}$.
 So the number of  non-decreasing sequences of length $n$
  from $\{1,2,\ldots,n+1\}$ is ${2n\choose n}$.
  On the other hand, consider the following sequence
  $$\underbrace{a_{i_1},a_{i_2},\ldots,a_{i_{n-r}}}_{(I)},\underbrace{a_{i_{n-r+1}},\ldots,a_{i_{n-1}},a_{i_n}}_{(II)},$$
where $i_1\leq i_2\leq\cdots\leq i_n$ and $1\leq i_j\leq n+1$.
   Suppose that in part (I) we put only 1, so in part (II)
    there are ${n+r\choose r}$ sequences as $s=r$ and $t=n+1$. Thus we have ${n+r\choose r}=P_{n+1}^{(r)}$ sequences in this step.

Now assume that in  (I) we put only 1, 2, so  in  (II)
there are ${n+r-1\choose r}$ sequences  as $s=r$ and $t=n$ (since here we cannot use 1, hence $t\not= n+1$).
 Also in  (I) we have ${n-r+1\choose n-r}-1$ sequences, since
$s=n-r$, $t=2$, and that $(-1)$ is because of the repetitive sequence
$\underbrace{1,1,\ldots,1}_{(n-r)-times}$
in the previous step. By Pascal's identity we have
$${n-r+1\choose n-r}-1={n-r+1\choose n-r}-{n-r\choose n-r}={n-r\choose n-r-1}.$$
Therefore, there are ${n-r\choose n-r-1}P_{n}^{(r)}$ sequences in this step.

In general,
suppose that in  (I) we put only from $\{1,2,\ldots,n-i+1\ |\ 0\leq i\leq n\}$, so in (II)
there are ${i+r\choose r}$ sequences since $s=r$ and $t=n+1-(n-i)=i+1$ (as we are not allowed to use $1,2\ldots,n-i$).
 Also in  (I) we have
 $${2n-r-i\choose n-r}-{2n-r-1-i\choose n-r}={2n-r-1-i\choose n-r-1}$$
  sequences since
$s=n-r$, $t=n-i+1$, and  we need again to remove ${2n-r-1-i\choose n-r}$ repetitive sequences in the previous step.
Thus we have
${2n-r-1-i\choose n-r-1}P_{i+1}^{(r)}$ sequences in this step, which completes the proof. \ \ \ $\blacksquare$
\end{proof}

It may be useful to know that the number of \Lie-brackets in Theorem \ref{th1} (i),  was first obtained with $r=3$, i.e., by
${2n\choose n}=\sum_{i=0}^n {2n-4-i\choose n-4}H_{i+1}$,
for every $n\geq 4$.

\begin{remark}\label{rem1} \normalfont
The interpretation of  formula (\ref{pascal})
 on Pascal's triangle can be interesting in its own way. Pay attention to the coefficients of $T_i$'s below
 and their position in Pascal's triangle:
 $${6\choose 3}={\it20}= {\bf1} T_4+{\bf1}T_3+{\bf1}T_2+{\bf1}T_1,$$
 \begin{align*}
 {8\choose 4}={\it70}&= {\bf1} T_5 +{\bf2}T_4 +{\bf3}T_3+{\bf4}T_2+{\bf5}T_1\\
 &=H_5 +H_4 + H_3 + H_2 + H_1,
 \end{align*}
 \begin{align*}
 {10\choose 5}={\it252}&= {\bf1} T_6 +{\bf3}T_5 +{\bf6}T_4 +{\bf10}T_3 +{\bf15}T_2+ {\bf21}T_1\\
 &= H_6 + 2H_5+ 3H_4 + 4H_3 + 5H_2 + 6H_1\\
 &= P_6^{(4)} + P_5^{(4)} + P_4^{(4)} + P_3^{(4)}+P_2^{(4)}+ P_1^{(4)}.\\
 \end{align*}

\begin{tabular}{>{$}l<{$\hspace{12pt}}*{16}{c}}
 &&&&&&$_{P_i^{(1)} {_\searrow}}$&&${\bf 1}_{_\swarrow}$&&&&&&\\
 &&&&&$_{T_i {_\searrow}}$&&${\bf 1}$&&${\bf1}_{_\swarrow}$&&&&&\\
 &&&&$_{H_i {_\searrow}}$&&${\bf 1}$&&${\bf2}$&&${\bf1}_{_\swarrow}$&&&&\\
 &&&$_{P_i^{(4)} {_\searrow}}$&&${\bf 1}$&&${\bf3}$&&${\bf3}$&&1&&&\\
 &&&&1&&${\bf4}$&&${\bf6}$&&4&&1&&\\
 &&&1&&${\bf5}$&&${\bf10}$&&10&&5&&1&\\
 &&1&&6&&${\bf15}$&&{\it20}&&15&&6&&1\\
 &1&&7&&${\bf21}$&&35&&35&&21&&7&&1\\
 1&&8&&28&&56&&{\it70}&&56&&28&&8&&1 \\
 &&&&&&&&\vdots
\end{tabular}

Also observe that
 \begin{align*}
 &T_1=1,\ \ T_2=3,\ \ T_3=6,\ \ T_4=10,\ \ T_5=15,\ \ T_6=21,\\
 &H_1=1,\ \ H_2=4,\ \ H_3=10,\ \ H_4=20,\ \ H_5=35,\ \ H_6=56,\\
 &P_1^{(4)}=1,\ \ P_2^{(4)}=5,\ \ P_3^{(4)}=15,\ \ P_4^{(4)}=35, \ \ P_5^{(4)}=70, \ \ P_6^{(4)}=126.
 \end{align*}
\end{remark}

In what follows, we provide some upper bounds on  $\dim\mathcal{M}_\Lie(\q)$ which are less than or equal to the existing upper bound
 published in \cite{s-b1}.
Using Lemma \ref{lem1} (iii) and  an argument similar to the proof of Theorem \ref{th1} (i), we may prove Theorem \ref{th0}
 by a different combinatorial technique.
\begin{proposition}\label{prop1}
Let $\q$ be an $m$-dimensional Leibniz $n$-algebra. Then
\[\dim\mathcal{M}_\Lie(\q)\leq {m+n-1\choose n}-\dim\q_\Lie^2.\]
\end{proposition}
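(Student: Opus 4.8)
The plan is to use Lemma~\ref{lem1}(iii), which states that
$\dim\mathcal{M}_\Lie(\q) + \dim\q_\Lie^2 = \dim\big(\f_\Lie^2/[\R,\f,\ldots,\f]_\Lie\big)$
for any free presentation $0\to\R\to\f\to\q\to 0$. This reduces the problem to bounding $\dim\big(\f_\Lie^2/[\R,\f,\ldots,\f]_\Lie\big)$ from above by $\binom{m+n-1}{n}$. The quotient $\f_\Lie^2/[\R,\f,\ldots,\f]_\Lie$ is generated by cosets of $\Lie$-brackets $[f_1,\ldots,f_n]_\Lie + [\R,\f,\ldots,\f]_\Lie$, and since the bracket is $\K$-multilinear, it suffices to run the generating arguments over a spanning set. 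Because $\R\subseteq\f$ and $[\R,\f,\ldots,\f]_\Lie$ is killed in the quotient, any bracket involving an argument lying in $\R$ vanishes, so the relevant arguments can be taken modulo $\R$, i.e.\ in $\f/\R\cong\q$, which is $m$-dimensional. Fixing a basis $\{e_1,\ldots,e_m\}$ of (a vector-space complement representing) $\q$, the quotient is spanned by the classes of $[e_{i_1},\ldots,e_{i_n}]_\Lie$.

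The combinatorial heart, mirroring the counting step in the proof of Theorem~\ref{th1}(i), is the symmetry relation recorded there,
$$[x_1,\ldots,x_i,x_{i+1},\ldots,x_n]_\Lie=[x_1,\ldots,x_{i+1},x_i,\ldots,x_n]_\Lie,$$
i.e.\ the $\Lie$-bracket is invariant under permuting its arguments (this follows immediately from its definition as a symmetrized sum over $S_n$). Consequently the spanning brackets $[e_{i_1},\ldots,e_{i_n}]_\Lie$ depend only on the multiset $\{i_1,\ldots,i_n\}$, so I may restrict to non-decreasing index sequences $i_1\leq i_2\leq\cdots\leq i_n$ with each $i_j\in\{1,\ldots,m\}$. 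The number of such non-decreasing sequences of length $n$ drawn from an $m$-element set is the multiset coefficient $\binom{m+n-1}{n}$, by the standard stars-and-bars count (the same fact cited from \cite{mar} in the proof of Theorem~\ref{th1}). Hence $\dim\big(\f_\Lie^2/[\R,\f,\ldots,\f]_\Lie\big)\leq\binom{m+n-1}{n}$.

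Combining this bound with the identity from Lemma~\ref{lem1}(iii) gives
$\dim\mathcal{M}_\Lie(\q)\leq\binom{m+n-1}{n}-\dim\q_\Lie^2$,
which is exactly the claimed inequality. I expect the main subtlety to be the justification that one may reduce the bracket arguments modulo $\R$ and thereby pass from $\f$ to the $m$-dimensional $\q$: one must check that every $\Lie$-bracket with at least one argument in $\R$ lies in $[\R,\f,\ldots,\f]_\Lie$, so that it is zero in the quotient, and that the symmetry relation correctly identifies brackets differing only by a reordering of arguments. Both points are routine given multilinearity and the permutation-invariance of $[-,\ldots,-]_\Lie$, but they are what makes the clean multiset count applicable. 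This is strictly a counting refinement of Theorem~\ref{th0}, whose bound $\sum_{i=1}^n\binom{n-1}{i-1}\binom{m}{i}$ overcounts by treating arguments as distinguishable in a finer way; indeed one can check $\binom{m+n-1}{n}\leq\sum_{i=1}^n\binom{n-1}{i-1}\binom{m}{i}$, so Proposition~\ref{prop1} does improve Theorem~\ref{th0}.
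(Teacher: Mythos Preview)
Your argument is correct and follows essentially the same route as the paper: invoke Lemma~\ref{lem1}(iii), span $\f_\Lie^2/[\R,\f,\ldots,\f]_\Lie$ by $\Lie$-brackets in lifts of a basis of $\q$, use the $S_n$-symmetry of $[-,\ldots,-]_\Lie$ to reduce to non-decreasing index sequences, and count these as $\binom{m+n-1}{n}$. One correction to your closing remark: the bound does not strictly improve Theorem~\ref{th0} but coincides with it, since by Vandermonde's identity $\sum_{i=1}^{n}\binom{n-1}{i-1}\binom{m}{i}=\sum_{i=1}^{n}\binom{n-1}{n-i}\binom{m}{i}=\binom{m+n-1}{n}$; the paper makes exactly this observation.
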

\begin{proof}
Let $\q\cong\f/\R=\langle f_1+\R,f_2+\R,\ldots,f_m+\R\rangle$.
By Lemma \ref{lem1} (iii), it is enough to count the \Lie-brackets of the form
$[f_{i_1},f_{i_2},\ldots,f_{i_{n}}]_\Lie$ where $i_1\leq i_2\leq\cdots\leq i_{n}$ and $i_j\in\{1,2,\ldots,m\}$ for all
 $1\leq j\leq n$. Thus
 $$\dim\mathcal{M}_\Lie(\q) + \dim\q_\Lie^2 = \dim\dfrac{\f_\Lie^2}{[\R,\f,\ldots,\f]_\Lie}\leq {m+n-1\choose n}.$$
In fact, this upper bound is the same as that obtained in Theorem \ref{th0}, since
 $\sum_{i=1}^{n} {{n-1}\choose{i-1}}{m\choose i}=\sum_{i=1}^{n} {{n-1}\choose{n-i}}{m\choose i}={m+n-1\choose n}$. \ \ \ $\blacksquare$
\end{proof}

Also, \cite[Theorem 4.5]{s-b1} can be presented as follows.
\begin{corollary}
Let $\q$ be a Leibniz $n$-algebra such that $\dim(\q/Z_\Lie(\q))=m$. Then
\[\dim\q_\Lie^2\leq {m+n-1\choose n}.\]
\end{corollary}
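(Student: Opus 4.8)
The plan is to bound $\dim\q_\Lie^2$ directly by exhibiting a small spanning set of $\Lie$-brackets, using the same combinatorial idea as in the proof of Proposition~\ref{prop1}, but with the quotient $\q/Z_\Lie(\q)$ playing the role that $\f/\R$ played there. By definition $\q_\Lie^2=\langle[x_1,\ldots,x_n]_\Lie\mid x_i\in\q\rangle$, so it suffices to produce a spanning family of such brackets whose cardinality is $\binom{m+n-1}{n}$.

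First I would record two structural properties of the $\Lie$-bracket. The bracket $[x_1,\ldots,x_n]_\Lie=\sum_{\sigma\in S_n}[x_{\sigma(1)},\ldots,x_{\sigma(n)}]$ is \emph{fully} symmetric in its $n$ arguments: the adjacent-transposition identity $[x_1,\ldots,x_i,x_{i+1},\ldots,x_n]_\Lie=[x_1,\ldots,x_{i+1},x_i,\ldots,x_n]_\Lie$ quoted in the paper generates all of $S_n$. Combined with $n$-linearity, this forces the bracket to vanish whenever one of its entries lies in $Z_\Lie(\q)$: by symmetry we may move that entry into the first slot, and then $[z,\q,\ldots,\q]_\Lie=0$ by the very definition of the $\Lie$-center. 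Hence every $\Lie$-bracket depends only on the classes of its arguments modulo $Z_\Lie(\q)$.

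Next, I would fix a basis $\{e_1+Z_\Lie(\q),\ldots,e_m+Z_\Lie(\q)\}$ of the $m$-dimensional space $\q/Z_\Lie(\q)$. By $n$-linearity together with the reduction of the previous paragraph, every generator $[x_1,\ldots,x_n]_\Lie$ of $\q_\Lie^2$ is a linear combination of brackets $[e_{i_1},\ldots,e_{i_n}]_\Lie$ with each $i_j\in\{1,\ldots,m\}$; and by full symmetry we may assume $i_1\le i_2\le\cdots\le i_n$. Thus the $\Lie$-brackets indexed by non-decreasing sequences of length $n$ from $\{1,\ldots,m\}$ span $\q_\Lie^2$. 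Invoking the same count of non-decreasing sequences used in Theorem~\ref{th1}~(i) and Proposition~\ref{prop1}, there are exactly $\binom{m+n-1}{n}$ of these, which yields $\dim\q_\Lie^2\le\binom{m+n-1}{n}$.

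The only genuinely delicate point is the passage from $\q$ to the quotient $\q/Z_\Lie(\q)$: one must verify that a $\Lie$-bracket with a central entry in \emph{any} slot—not merely the first—is zero. This is precisely where the full symmetry of the $\Lie$-bracket is essential, and it is what shrinks the index set from $\{1,\ldots,\dim\q\}$ down to $\{1,\ldots,m\}$. Everything after that reduction is the routine enumeration of non-decreasing multi-indices already employed earlier in the paper.
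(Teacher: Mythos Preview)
Your proof is correct and follows essentially the same combinatorial idea the paper relies on. In the paper this corollary is stated without an explicit proof, as a restatement of \cite[Theorem~4.5]{s-b1} via the identity $\sum_{i=1}^{n}\binom{n-1}{i-1}\binom{m}{i}=\binom{m+n-1}{n}$ established in Proposition~\ref{prop1}; your direct argument---using full symmetry of the $\Lie$-bracket to reduce modulo $Z_\Lie(\q)$ and then counting non-decreasing multi-indices exactly as in Proposition~\ref{prop1}---is precisely the intended mechanism spelled out in full.
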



Next following \cite{safa},
we  discuss an inequality on the dimension of the Schur \Lie-multiplier of \Lie-nilpotent Leibniz $n$-algebras.

\begin{theorem}\label{th2}
Let $\q$ be a finite dimensional \Lie-nilpotent  Leibniz $n$-algebra of class $c$ with  $\dim(\q/\q_\Lie^2)=m$.
 Then for every $i\geq2$ and $2\leq j\leq c$
\[\dim\mathcal{M}_\Lie(\q)\leq\dim\mathcal{M}_\Lie(\frac{\q}{\q_{\Lie}^i})+\dim\q_{\Lie}^j \bigg{[}{m+n-2\choose n-1}-1\bigg{]}.\]
\end{theorem}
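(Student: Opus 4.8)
The plan is to run an induction (equivalently, a telescoping sum) along the $\Lie$-lower central series $\q=\q_\Lie^1\supseteq\q_\Lie^2\supseteq\cdots\supseteq\q_\Lie^{c+1}=0$, cutting the comparison between $\mathcal{M}_\Lie(\q)$ and $\mathcal{M}_\Lie(\q/\q_\Lie^i)$ into one-step estimates, one for each successive quotient $\q_\Lie^k/\q_\Lie^{k+1}$. The structural fact that powers the induction is that for every $k\geq 2$ the image of $\q_\Lie^k$ in $\bar\q:=\q/\q_\Lie^{k+1}$ is $\Lie$-central, because $[\q_\Lie^k,\q,\ldots,\q]_\Lie=\q_\Lie^{k+1}$ becomes zero in $\bar\q$; moreover this image sits inside $\bar\q_\Lie^2=\q_\Lie^2/\q_\Lie^{k+1}$ (as $k\geq 2$), and one has $\bar\q\big/(\q_\Lie^k/\q_\Lie^{k+1})\cong\q/\q_\Lie^k$ together with $\bar\q/\bar\q_\Lie^2\cong\q/\q_\Lie^2$ of dimension $m$.

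First I would establish the one-step inequality. Fixing $k$, choose a free presentation $0\to\R\to\f\to\bar\q\to 0$ and the ideal $\s\supseteq\R$ of $\f$ with $\s/\R\cong\q_\Lie^k/\q_\Lie^{k+1}$, and apply Lemma \ref{lem1}(ii):
\[\dim\mathcal{M}_\Lie(\bar\q)+\dim\Big(\frac{\q_\Lie^k}{\q_\Lie^{k+1}}\cap\bar\q_\Lie^2\Big)=\dim\mathcal{M}_\Lie\Big(\frac{\q}{\q_\Lie^k}\Big)+\dim\frac{[\s,\f,\ldots,\f]_\Lie}{[\R,\f,\ldots,\f]_\Lie}.\]
Exactly as in the proof of Theorem \ref{th1}(i), I would then bound the last term by the surjective multilinear map
\[\psi:\frac{\q_\Lie^k}{\q_\Lie^{k+1}}\times\frac{\q}{\q_\Lie^2}\times\cdots\times\frac{\q}{\q_\Lie^2}\longrightarrow\frac{[\s,\f,\ldots,\f]_\Lie}{[\R,\f,\ldots,\f]_\Lie},\]
which is well defined precisely because $\q_\Lie^k/\q_\Lie^{k+1}$ is $\Lie$-central in $\bar\q$. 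Since the $\Lie$-bracket is symmetric in all its entries, only non-decreasing index sequences of length $n-1$ taken from the $m$ basis vectors of $\q/\q_\Lie^2$ produce distinct generators, so $\dim\Ima\psi\leq\dim\big(\q_\Lie^k/\q_\Lie^{k+1}\big){m+n-2\choose n-1}$. Using $\big(\q_\Lie^k/\q_\Lie^{k+1}\big)\cap\bar\q_\Lie^2=\q_\Lie^k/\q_\Lie^{k+1}$, this gives
\[\dim\mathcal{M}_\Lie\Big(\frac{\q}{\q_\Lie^{k+1}}\Big)\leq\dim\mathcal{M}_\Lie\Big(\frac{\q}{\q_\Lie^{k}}\Big)+\dim\frac{\q_\Lie^k}{\q_\Lie^{k+1}}\Big[{m+n-2\choose n-1}-1\Big].\]

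Next I would telescope. Summing this one-step estimate over $k=i,i+1,\ldots,c$ and using $\q_\Lie^{c+1}=0$ (so that $\q/\q_\Lie^{c+1}=\q$), the intermediate multiplier terms cancel and $\sum_{k=i}^{c}\dim(\q_\Lie^k/\q_\Lie^{k+1})=\dim\q_\Lie^i$, yielding
\[\dim\mathcal{M}_\Lie(\q)\leq\dim\mathcal{M}_\Lie\Big(\frac{\q}{\q_\Lie^i}\Big)+\dim\q_\Lie^i\Big[{m+n-2\choose n-1}-1\Big].\]
Because ${m+n-2\choose n-1}-1\geq 0$ and the $\Lie$-lower central series is decreasing, $\dim\q_\Lie^i\leq\dim\q_\Lie^j$ whenever $j\leq i$, so the correction term may be enlarged to $\dim\q_\Lie^j$ without breaking the inequality; the sharpest instance is $j=i$, and this is how I would read the stated family of bounds.

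The step I expect to be the main obstacle is upgrading the crude factor $\dim(\q_\Lie^k/\q_\Lie^{k+1})\,m^{n-1}$ that Lemma \ref{lem1}(iv) would hand us to the sharp central-binomial factor ${m+n-2\choose n-1}$: this forces one to re-run the multilinear-map-with-symmetry count of Theorem \ref{th1}(i) and Proposition \ref{prop1} at every level $k$ rather than simply quoting part (iv), and to verify carefully that $\psi$ is well defined and surjective there. A secondary delicate point is ensuring compatibility of the chosen free presentations across the telescoping so that the ideals $\s$ at consecutive levels nest correctly, together with reconciling the telescoped index $i$ with the independent index $j$ through the monotonicity of the series.
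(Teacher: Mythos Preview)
Your telescoping along the $\Lie$-lower central series is correct and yields
\[
\dim\mathcal{M}_\Lie(\q)\leq\dim\mathcal{M}_\Lie\Big(\frac{\q}{\q_\Lie^i}\Big)+\dim\q_\Lie^i\Big[{m+n-2\choose n-1}-1\Big],
\]
after which monotonicity of the series lets you replace $\dim\q_\Lie^i$ by the larger $\dim\q_\Lie^j$ for any $j\leq i$. The gap is that the theorem is stated for all $2\leq j\leq c$ \emph{independently} of $i$; when $i<j\leq c$ one has $\dim\q_\Lie^j\leq\dim\q_\Lie^i$, and your monotonicity step cannot \emph{shrink} the coefficient. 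This is not an idle extension of the claim: inequality~(\ref{eq3.15}) applies Theorem~\ref{th2} with $i=2$ and $j=c=m-n+1$, an $i<j$ instance, so your argument as written does not establish the full statement (your closing remark that ``this is how I would read the stated family of bounds'' is too narrow a reading).

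The paper avoids this by inducting on $\dim\q$ rather than on the lower-central index, peeling off at each step a one-dimensional ideal $\I\subseteq\q_\Lie^c$. Because $\I\subseteq\q_\Lie^c\subseteq\q_\Lie^j$ for \emph{every} $j\leq c$, each peel gives $\dim(\q/\I)_\Lie^j=\dim\q_\Lie^j-1$ regardless of how $j$ relates to $i$, while $(\q/\I)/(\q/\I)_\Lie^i\cong\q/\q_\Lie^i$ still holds since also $\I\subseteq\q_\Lie^i$; thus $i$ and $j$ are decoupled in the induction. Your block removal of $\q_\Lie^k/\q_\Lie^{k+1}$ for $k\geq i$ forces the accumulated coefficient to be $\sum_{k\geq i}\dim(\q_\Lie^k/\q_\Lie^{k+1})=\dim\q_\Lie^i$, which is exactly where the decoupling is lost. (By contrast, your secondary worries---compatible free presentations across levels, and sharpening the crude $m^{n-1}$ of Lemma~\ref{lem1}(iv) to ${m+n-2\choose n-1}$---are not real obstacles: each one-step estimate is a stand-alone dimension inequality, and the symmetry count from Theorem~\ref{th1}(i) goes through verbatim at every $k$.)
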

\begin{proof}
 For $c=1$ or $i>c$, clearly $\q_{\Lie}^i=0$ and the above inequality holds. So let $2\leq i\leq c$, and since
 $0\not=\q_{\Lie}^c\subseteq Z_\Lie(\q)$ we may choose a one-dimensional ideal $\I$ of $\q$ contained in $\q_{\Lie}^c$.
 By Lemma \ref{lem1} (ii) and the technique applied in the proof of Theorem \ref{th1} (i) we get
 \begin{eqnarray}\label{ineq1}
 \dim\mathcal{M}_\Lie(\q)  \leq \dim\mathcal{M}_\Lie(\dfrac{\q}{\I}) +{m+n-2\choose n-1}-1.
 \end{eqnarray}
 By induction on $\dim\q$, suppose that the main inequality holds for  $\q/\I$ (whose dimension is less than $\dim\q$),  i.e.
\begin{eqnarray*}
\dim\mathcal{M}_\Lie(\dfrac{\q}{\I})&\leq&\dim\mathcal{M}_\Lie(\frac{\q/\I}{(\q/\I)_{\Lie}^i})+\dim(\q/\I)_{\Lie}^j \bigg{[}{m+n-2\choose n-1}-1\bigg{]}\\
&=&\dim\mathcal{M}_\Lie(\frac{\q}{\q_{\Lie}^i})+(\dim\q_{\Lie}^j -1)\bigg{[}{m+n-2\choose n-1}-1\bigg{]},
\end{eqnarray*}
since $\I\subseteq \q_{\Lie}^c\subseteq \q_{\Lie}^i\subseteq \q_{\Lie}^2$, \ $\dim\dfrac{\q/\I}{(\q/\I)_\Lie^2}=\dim(\q/\q_\Lie^2)=m$
 and $\dim(\q/\I)_{\Lie}^j=\dim(\q_{\Lie}^j/\I)=\dim\q_{\Lie}^j -1$.
Now using the inequality (\ref{ineq1}) we have
\begin{eqnarray*}
\dim\mathcal{M}_\Lie(\q)&\leq&\dim\mathcal{M}_\Lie(\frac{\q}{\q_{\Lie}^i})+(\dim\q_{\Lie}^j -1)\bigg{[}{m+n-2\choose n-1}-1\bigg{]}\\
&+&\bigg{[}{m+n-2\choose n-1}-1\bigg{]}\\
&=& \dim\mathcal{M}_\Lie(\frac{\q}{\q_{\Lie}^i})+\dim\q_{\Lie}^j \bigg{[}{m+n-2\choose n-1}-1\bigg{]},
\end{eqnarray*}
 which completes the proof. \ \ \ $\blacksquare$
\end{proof}


The following are  immediate corollaries of Proposition \ref{prop1} and Theorem \ref{th2}.

\begin{corollary}\label{coro11}
   Let $\q$ be a finite dimensional \Lie-nilpotent  Leibniz $n$-algebra
such that  $\dim\q_\Lie^2=\dim\q-1$. Then $\dim\mathcal{M}_\Lie(\q)\leq1$.
\end{corollary}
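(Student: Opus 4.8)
The plan is to reduce everything to the $1$-dimensional \Lie-abelian quotient $\q/\q_\Lie^2$ via Theorem \ref{th2}, exploiting the fact that the hypothesis collapses the combinatorial cost term to zero. First I would record the elementary dimension count: since $\q_\Lie^2$ is an ideal of $\q$, writing $N=\dim\q$ gives $\dim(\q/\q_\Lie^2)=N-\dim\q_\Lie^2=N-(N-1)=1$. Thus the parameter $m=\dim(\q/\q_\Lie^2)$ appearing in Theorem \ref{th2} equals $1$.

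The crucial observation is that with $m=1$ the bracket in Theorem \ref{th2} vanishes: ${m+n-2\choose n-1}={n-1\choose n-1}=1$, so ${m+n-2\choose n-1}-1=0$. Assuming $\q$ is \Lie-nilpotent of class $c\geq 2$, I would apply Theorem \ref{th2} with $i=2$ and any admissible $j$ (the range $2\leq j\leq c$ being nonempty) to obtain $\dim\mathcal{M}_\Lie(\q)\leq\dim\mathcal{M}_\Lie(\q/\q_\Lie^2)$, since the second summand equals $\dim\q_\Lie^j\cdot 0=0$ irrespective of $j$.

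It then remains only to bound the multiplier of the quotient. The algebra $\q/\q_\Lie^2$ is \Lie-abelian (its \Lie-commutator $(\q/\q_\Lie^2)_\Lie^2$ is the image of $\q_\Lie^2$, hence trivial) and has dimension $m=1$, so Proposition \ref{prop1} yields $\dim\mathcal{M}_\Lie(\q/\q_\Lie^2)\leq{1+n-1\choose n}-0={n\choose n}=1$. Combining this with the inequality from the previous step gives $\dim\mathcal{M}_\Lie(\q)\leq 1$.

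The only point requiring separate care — and the closest thing to an obstacle — is the degenerate case $c=1$, where the index range $2\leq j\leq c$ of Theorem \ref{th2} is empty and the reduction above does not literally apply. But here $\q$ is itself \Lie-abelian with $\dim\q_\Lie^2=0$, so the hypothesis forces $\dim\q=1$, and Proposition \ref{prop1} applied directly to $\q$ gives $\dim\mathcal{M}_\Lie(\q)\leq{n\choose n}=1$. Thus the statement is, as advertised, an immediate consequence of Proposition \ref{prop1} and Theorem \ref{th2}, the whole point being that the hypothesis $\dim\q_\Lie^2=\dim\q-1$ pins $\dim(\q/\q_\Lie^2)$ to $1$.
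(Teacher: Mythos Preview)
Your proof is correct and follows essentially the same approach as the paper's: apply Theorem~\ref{th2} with $i=2$ (where $m=\dim(\q/\q_\Lie^2)=1$ makes the second summand vanish) and then bound $\dim\mathcal{M}_\Lie(\q/\q_\Lie^2)$ by Proposition~\ref{prop1}. Your treatment is in fact more careful than the paper's terse two-line argument, since you explicitly verify that ${m+n-2\choose n-1}-1=0$ when $m=1$ and you separately dispose of the degenerate case $c=1$, which the paper's proof glosses over.
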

\begin{proof}
Theorem \ref{th2} implies that
$\dim\mathcal{M}_\Lie(\q)\leq\dim\mathcal{M}_\Lie(\q/\q_{\Lie}^2)$, and
 Proposition \ref{prop1} gives
$\dim\mathcal{M}_\Lie(\q/\q_{\Lie}^2)\leq {1+n-1\choose n}=1$. \ \ \ $\blacksquare$
\end{proof}


\begin{corollary}
  Let $\q$ be an $m$-dimensional \Lie-nilpotent  Leibniz $n$-algebra with $\dim\q_\Lie^2=d$. Then
    $$\dim\mathcal{M}_\Lie(\q)\leq {m-d+n-1\choose n}+ d {m-d+n-2\choose n-1}-d.$$
\end{corollary}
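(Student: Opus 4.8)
The plan is to obtain the bound by feeding Theorem~\ref{th2} and Proposition~\ref{prop1} into one another; the only real point of care is a clash of notation, since in Theorem~\ref{th2} the letter $m$ denotes $\dim(\q/\q_\Lie^2)$, whereas in the present statement $m=\dim\q$ and $d=\dim\q_\Lie^2$. I would therefore first record that here $\dim(\q/\q_\Lie^2)=m-d$, and keep this substitution $m\mapsto m-d$ in mind throughout.

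First I would specialise Theorem~\ref{th2} to the smallest admissible indices $i=j=2$. With $m-d$ in the role of the ``$m$'' of that theorem and $\dim\q_\Lie^2=d$, its conclusion reads
$$\dim\mathcal{M}_\Lie(\q)\leq\dim\mathcal{M}_\Lie(\q/\q_{\Lie}^2)+d\bigg{[}{m-d+n-2\choose n-1}-1\bigg{]}.$$

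Next I would bound the surviving term $\dim\mathcal{M}_\Lie(\q/\q_\Lie^2)$. The quotient $\q/\q_\Lie^2$ is \Lie-abelian, so its \Lie-commutator is zero, and it has dimension $m-d$. Applying Proposition~\ref{prop1} to it (again with $m-d$ in place of $m$ and with the \Lie-commutator term equal to $0$) gives $\dim\mathcal{M}_\Lie(\q/\q_{\Lie}^2)\leq{m-d+n-1\choose n}$. Substituting this into the previous display and expanding the bracket yields exactly
$${m-d+n-1\choose n}+d{m-d+n-2\choose n-1}-d,$$
as claimed.

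There is essentially no genuine obstacle: the statement is a formal corollary of the two cited results, and the worst that can go wrong is the bookkeeping of the dimension substitution $m\mapsto m-d$. I would, however, explicitly check the degenerate cases for consistency. If $d=0$ then $\q$ is \Lie-abelian, $\q_\Lie^2=0$, the middle and last terms vanish, and the bound collapses to ${m+n-1\choose n}$, i.e.\ precisely Proposition~\ref{prop1}; likewise when the \Lie-nilpotency class is $c=1$ one has $\q_\Lie^2=0$ and the same reduction occurs. This confirms that the chosen indices $i=j=2$ are legitimate and that the formula behaves correctly at the boundary.
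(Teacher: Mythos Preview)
Your proposal is correct and follows exactly the route the paper intends: the statement is presented there as an immediate corollary of Proposition~\ref{prop1} and Theorem~\ref{th2}, with no further proof, and your specialisation $i=j=2$ together with the substitution $m\mapsto m-d$ is precisely the unwinding the reader is meant to perform. Your remark on the notational clash and the check of the degenerate case $d=0$ (equivalently $c=1$, where the hypothesis $2\le j\le c$ of Theorem~\ref{th2} becomes vacuous) are both apt and make the argument cleaner than the paper's bare citation.
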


Applying Pascal's identity $d$-times, it can be shown that the above upper bound is  less than or equal to that obtained in Proposition \ref{prop1}
(and  Theorem \ref{th0}).
For $n=2$, we have the following upper bound  which   clearly improves
$\dim\mathcal{M}_\Lie(\q)\leq \frac{1}{2}m(m+1)-d,$ given in \cite{s-b1}.

\begin{corollary}\label{corollary}
  Let $\q$ be an $m$-dimensional \Lie-nilpotent  Leibniz algebra with $\dim\q_\Lie^2=d$. Then
    $$\dim\mathcal{M}_\Lie(\q)\leq \frac{1}{2}\big{(} m^2+m -md \big{)}-d.$$
\end{corollary}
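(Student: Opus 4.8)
The natural first move is to reduce to the \Lie-abelian quotient. Since $\q$ has dimension $m$ and $\dim\q_\Lie^2=d$, the quotient $\q/\q_\Lie^2$ is \Lie-abelian of dimension $m-d$. Applying Theorem \ref{th2} with $n=2$ and $i=j=2$—so that the ambient dimension appearing there is $\dim(\q/\q_\Lie^2)=m-d$—and bounding the \Lie-multiplier of the \Lie-abelian quotient by Proposition \ref{prop1}, one obtains
\[
\dim\mathcal{M}_\Lie(\q)\leq\dim\mathcal{M}_\Lie\big(\tfrac{\q}{\q_\Lie^2}\big)+d\big[(m-d)-1\big]\leq\binom{m-d+1}{2}+d(m-d)-d .
\]
This is the $n=2$ instance of the preceding corollary and is the natural starting estimate; it is, however, in general strictly weaker than the bound claimed here (the two agree only when $m=d+1$), so a plain specialization will not suffice.

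To reach the sharp constant I would argue directly from Lemma \ref{lem1} (iii), which for $n=2$ reads $\dim\mathcal{M}_\Lie(\q)+d=\dim\big([\f,\f]_\Lie/[\R,\f]_\Lie\big)$, so it suffices to prove $\dim\big([\f,\f]_\Lie/[\R,\f]_\Lie\big)\leq\tfrac12 m(m-d+1)=\tfrac12(m^2+m-md)$. The space $[\f,\f]_\Lie/[\R,\f]_\Lie$ is spanned by the symmetric classes $\overline{[f_i,f_j]_\Lie}$ with $i\leq j$, and I would work in a basis $f_1,\dots,f_{m-d},f_{m-d+1},\dots,f_m$ of $\f$ (mod $\R$) whose last $d$ vectors lift a basis of $\q_\Lie^2$, chosen so that each such vector maps to a \Lie-bracket (possible since $\q_\Lie^2$ is spanned by \Lie-brackets). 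Splitting the generators according to how many of their two arguments lie in $\q_\Lie^2$, the $\binom{m-d+1}{2}$ generators with both arguments outside $\q_\Lie^2$ are kept, while the $d(m-d)+\binom{d+1}{2}$ generators meeting $\q_\Lie^2$ must be shown to contribute only $\tfrac12 d(m-d+1)$ independent classes modulo $[\R,\f]_\Lie$; summing the two counts gives exactly $\binom{m-d+1}{2}+\tfrac12 d(m-d+1)=\tfrac12 m(m-d+1)$, as required.

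Finally, writing $\tfrac12(m^2+m-md)-d=\tfrac12 m(m+1)-d-\tfrac{md}{2}$ exhibits the result as the bound $\tfrac12 m(m+1)-d$ of \cite{s-b1} (Theorem \ref{theorem2.4}) decreased by the nonnegative quantity $\tfrac{md}{2}$, so the claimed improvement is immediate. I expect the main obstacle to be precisely the collapse asserted in the second paragraph: proving that the generators meeting $\q_\Lie^2$ span only $\tfrac12 d(m-d+1)$ classes rather than the $d(m-d)+\binom{d+1}{2}$ produced by a blind count. This is where the binary \Lie-Leibniz identity—together with the fact that the arguments lying in $\q_\Lie^2$ are themselves \Lie-brackets, hence expandable into deeper, already-counted brackets—has to be used carefully; it is the only step genuinely special to $n=2$ that does not follow by merely specializing the general-$n$ estimate.
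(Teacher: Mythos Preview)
Your arithmetic is correct: specializing the preceding general-$n$ corollary to $n=2$ gives
\[
\binom{m-d+1}{2}+d(m-d)-d=\frac{(m-d)(m+d+1)}{2}-d,
\]
which exceeds the claimed bound $\tfrac{1}{2}m(m-d+1)-d$ by $\tfrac{1}{2}d(m-d-1)>0$ whenever $m>d+1$. So you are right that plain specialization does not establish the stated inequality.

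The paper, however, offers nothing further. The sentence preceding the corollary (``For $n=2$, we have the following upper bound\ldots'') presents it explicitly as the $n=2$ instance of the preceding result, with no separate argument. In other words, the paper's own proof \emph{is} the specialization you have already shown to be insufficient. Either the displayed formula contains a misprint---the honest $n=2$ specialization reads $\tfrac{1}{2}(m^2+m-d^2-d)-d$---or the paper has a gap here. The later textual comparison with Corollary~\ref{coro3.16} (equality at $d=2$, opposite inequalities for $d=1$ and $d>2$) is arithmetically consistent with the bound \emph{as printed}, which suggests the authors intended the stronger statement; but they supply no proof of it.

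Your direct attempt via Lemma~\ref{lem1}(iii) rephrases the target correctly, yet the decisive step---that the $d(m-d)+\binom{d+1}{2}$ symmetric generators $\overline{[f_i,f_j]_\Lie}$ with at least one argument lifting $\q_\Lie^2$ span only $\tfrac{1}{2}d(m-d+1)$ independent classes modulo $[\R,\f]_\Lie$---is, as you yourself flag, unproved. Replacing such an $f_j$ by a \Lie-bracket lands you in a triple bracket of $\f$, not in $[\R,\f]_\Lie$, so the Leibniz identity alone does not obviously force the required collapse. I do not see a clean way to close this gap, and it is entirely possible that the printed bound is simply in error.
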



The next result  provides another technique to find more upper bounds for  $\dim\mathcal{M}_\Lie(\q)$.

\begin{theorem}\label{th3}
Let $\q$ be a finite dimensional \Lie-nilpotent  Leibniz $n$-algebra of class $c\geq2$ with  $\dim(\q/\q_\Lie^2)=m$.  Then for every $2\leq i\leq c$
\[\dim\mathcal{M}_\Lie(\q)+ \dim\mathcal{M}_\Lie(\frac{\q}{\q_{\Lie}^i})\leq (\dim\q -1){m+n-2\choose n-1}.\]
\end{theorem}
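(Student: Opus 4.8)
The plan is to argue by induction on $\dim\q$, peeling off a one-dimensional central ideal at each step, exactly in the spirit of the proofs of Theorem \ref{th1} and Theorem \ref{th2}. Since $\q$ is \Lie-nilpotent of class $c\geq2$, the term $\q_\Lie^c$ is a nonzero ideal contained in $Z_\Lie(\q)$, so I may choose a one-dimensional ideal $\I$ of $\q$ with $\I\subseteq\q_\Lie^c$. The reduction rests on three observations. Because $\I\subseteq\q_\Lie^c\subseteq\q_\Lie^2$, we have $\dim\big((\q/\I)/(\q/\I)_\Lie^2\big)=\dim(\q/\q_\Lie^2)=m$. Because $\I\subseteq\q_\Lie^c\subseteq\q_\Lie^i$ (here $i\leq c$ is used), we get $(\q/\I)_\Lie^i=\q_\Lie^i/\I$, whence the isomorphism $(\q/\I)/(\q/\I)_\Lie^i\cong\q/\q_\Lie^i$ and therefore $\dim\mathcal{M}_\Lie\big((\q/\I)/(\q/\I)_\Lie^i\big)=\dim\mathcal{M}_\Lie(\q/\q_\Lie^i)$. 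Finally $\dim(\q/\I)=\dim\q-1$.

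The key per-step estimate is exactly the inequality (\ref{ineq1}) obtained in the proof of Theorem \ref{th2}: applying Lemma \ref{lem1}(ii) to $\I$, together with the surjective multilinear map $\psi$ of Theorem \ref{th1}(i) whose image is spanned by the \Lie-brackets $[s,f_{i_1},\ldots,f_{i_{n-1}}]_\Lie$ with $i_1\leq\cdots\leq i_{n-1}$ and $i_j\in\{1,\ldots,m\}$ (there are ${m+n-2\choose n-1}$ of them), and using $\dim(\I\cap\q_\Lie^2)=1$, I obtain
\[\dim\mathcal{M}_\Lie(\q)\leq\dim\mathcal{M}_\Lie(\q/\I)+{m+n-2\choose n-1}-1.\]
Adding $\dim\mathcal{M}_\Lie(\q/\q_\Lie^i)$ to both sides, rewriting that term as $\dim\mathcal{M}_\Lie\big((\q/\I)/(\q/\I)_\Lie^i\big)$ and invoking the induction hypothesis on $\q/\I$ (dimension $\dim\q-1$, same parameter $m$) to bound $\dim\mathcal{M}_\Lie(\q/\I)+\dim\mathcal{M}_\Lie\big((\q/\I)/(\q/\I)_\Lie^i\big)$ by $(\dim\q-2){m+n-2\choose n-1}$, I reach
\[\dim\mathcal{M}_\Lie(\q)+\dim\mathcal{M}_\Lie(\q/\q_\Lie^i)\leq(\dim\q-1){m+n-2\choose n-1}-1,\]
which would give the claim with room to spare.

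The hard part is the base of the induction, where $\q/\I$ fails to have \Lie-class $\geq2$. This happens precisely when $\dim\q_\Lie^c=1$, so that $\I=\q_\Lie^c$ and passing to $\q/\I$ drops the \Lie-nilpotency class from $c$ to $c-1$; then the fixed index $i$ may leave the admissible range, and in the borderline case $i=c$ one has $\q/\q_\Lie^i=\q/\I$, so the two multiplier terms coincide and the induction hypothesis cannot be used verbatim. In this terminal situation $\dim\mathcal{M}_\Lie(\q/\I)$ must be estimated directly through Proposition \ref{prop1}, and the \Lie-abelian contribution so obtained has to be balanced against the accumulated per-step terms; checking that this balance stays within $(\dim\q-1){m+n-2\choose n-1}$ is the delicate point on which the whole argument hinges, and it is here that the precise per-step cost ${m+n-2\choose n-1}$, rather than any coarser bound, is essential.
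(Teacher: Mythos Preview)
Your inductive step is exactly the paper's argument: choose a one-dimensional ideal $\I$ inside $\q_\Lie^i\cap Z_\Lie(\q)$ (you take $\I\subseteq\q_\Lie^c$, which is a special case since $\q_\Lie^c\subseteq\q_\Lie^i\cap Z_\Lie(\q)$), use the per-step estimate coming from Lemma~\ref{lem1}(ii) together with the surjection $\psi$, and then close the induction on $\dim\q$. The only cosmetic difference is that the paper uses the slightly weaker form~\eqref{ineq2} without the extra $-1$, so the room-to-spare you obtain is not needed.

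As to your final paragraph: the paper does \emph{not} isolate or treat the terminal situation $\dim\q_\Lie^i=1$ (equivalently $(\q/\I)_\Lie^i=0$) at all. It simply writes ``using induction on $\dim\q$'' and applies the hypothesis to $\q/\I$ without further comment, then finishes in three lines. The ``delicate balance'' you anticipate, and the appeal to Proposition~\ref{prop1} for the \Lie-abelian quotient, do not appear in the published proof. So your proposal already reproduces everything the paper actually does; whether the unaddressed base step constitutes a genuine gap in the paper's argument is a separate question, but no additional idea is supplied there for you to match.
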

\begin{proof}
Clearly $\q_\Lie^2\not=0$, since $c>1$. Now if $\dim\q=1$, then $0\not=\q_\Lie^2\subseteq\q$
implies that $\q_\Lie^2=\q$ and so $\q_\Lie^{c+1}=\q$ which is a contradiction. Thus $\dim\q>1$.
We choose a one-dimensional ideal $\I$ of $\q$ contained in $\q_\Lie^i\cap Z_\Lie(\q)$. By inequality (\ref{ineq1}) above, we have
\begin{eqnarray}\label{ineq2}
\dim\mathcal{M}_\Lie(\q)  \leq \dim\mathcal{M}_\Lie(\dfrac{\q}{\I}) +{m+n-2\choose n-1},
\end{eqnarray}
and using induction on $\dim\q$, we get
\begin{eqnarray*}
\dim\mathcal{M}_\Lie(\dfrac{\q}{\I})+\dim\mathcal{M}_\Lie(\frac{\q/\I}{(\q/\I)_{\Lie}^i})\leq(\dim\frac{\q}{\I}-1)
 {m+n-2\choose n-1},
\end{eqnarray*}
and hence
\begin{eqnarray*}
\dim\mathcal{M}_\Lie(\dfrac{\q}{\I})\leq(\dim\q-2)
{m+n-2\choose n-1}-\dim\mathcal{M}_\Lie(\frac{\q}{\q_{\Lie}^i}).
\end{eqnarray*}
Now (\ref{ineq2}) and the above inequality imply that
\begin{eqnarray*}
\dim\mathcal{M}_\Lie(\q)  &\leq& (\dim\q-2) {m+n-2\choose n-1}-\dim\mathcal{M}_\Lie(\frac{\q}{\q_{\Lie}^i}) +{m+n-2\choose n-1}\\
&=&(\dim\q-1){m+n-2\choose n-1} -\dim\mathcal{M}_\Lie(\frac{\q}{\q_{\Lie}^i}),
\end{eqnarray*}
which completes the proof. \ \ \ $\blacksquare$
\end{proof}


The following is an immediate corollary of Theorems \ref{th2} and \ref{th3}. Note that  Theorem \ref{th3}
does not work for \Lie-abelian Leibniz $n$-algebras, so in the next result $\dim\q_\Lie^2\geq 1$.

\begin{corollary}\label{corol3.15}
   Let $\q$ be an $m$-dimensional \Lie-nilpotent  Leibniz $n$-algebra with $\dim\q_\Lie^2=d\geq 1$. Then
    $$\dim\mathcal{M}_\Lie(\q)\leq \frac{1}{2}\bigg{[} \big{(} m +d -1 \big{)} {m-d+n-2\choose n-1}-d\bigg{]}.$$
\end{corollary}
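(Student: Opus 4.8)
The plan is to treat $\dim\mathcal{M}_\Lie(\q)$ and $\dim\mathcal{M}_\Lie(\q/\q_\Lie^2)$ as two unknowns, extract two linear inequalities relating them — one from Theorem \ref{th2} and one from Theorem \ref{th3} — and then eliminate the term $\dim\mathcal{M}_\Lie(\q/\q_\Lie^2)$ between them. The first bookkeeping point to record is that the symbol $m$ appearing in Theorems \ref{th2} and \ref{th3} stands for $\dim(\q/\q_\Lie^2)$, whereas in the present statement $m=\dim\q$. Since $\dim\q_\Lie^2=d$, we have $\dim(\q/\q_\Lie^2)=m-d$, so every occurrence of the binomial from those theorems becomes $\binom{m-d+n-2}{n-1}$ here. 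Note also that $d\geq 1$ forces $\q_\Lie^2\neq 0$, hence the \Lie-nilpotency class satisfies $c\geq 2$, which is precisely the hypothesis needed to invoke both theorems with the index choices below.

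First I would apply Theorem \ref{th2} with $i=j=2$. Taking $\dim\q_\Lie^2=d$ in the role of $\dim\q_\Lie^j$ gives
\[\dim\mathcal{M}_\Lie(\q)\leq\dim\mathcal{M}_\Lie(\frac{\q}{\q_\Lie^2})+d\left[\binom{m-d+n-2}{n-1}-1\right].\]
Next I would apply Theorem \ref{th3} with $i=2$. Since $\dim\q=m$, the factor $\dim\q-1$ equals $m-1$, and since $\q/\q_\Lie^i=\q/\q_\Lie^2$ is the same object as in the previous step, we obtain
\[\dim\mathcal{M}_\Lie(\q)+\dim\mathcal{M}_\Lie(\frac{\q}{\q_\Lie^2})\leq(m-1)\binom{m-d+n-2}{n-1}.\]

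The final step is the elimination. Rewriting the first inequality as $\dim\mathcal{M}_\Lie(\q)-\dim\mathcal{M}_\Lie(\q/\q_\Lie^2)\leq d\binom{m-d+n-2}{n-1}-d$, I note that the common quantity $\dim\mathcal{M}_\Lie(\q/\q_\Lie^2)$ now occurs with opposite signs in the two inequalities, so simply adding them cancels that term and yields
\[2\dim\mathcal{M}_\Lie(\q)\leq(m-1+d)\binom{m-d+n-2}{n-1}-d.\]
Dividing by $2$ produces exactly the claimed upper bound. I do not anticipate any genuine obstacle: the result is a direct linear combination of the two cited theorems, and the only places demanding attention are the substitution $m\mapsto m-d$ in the binomials and the verification that $c\geq 2$ (guaranteed by $d\geq 1$) so that both theorems apply with $i=j=2$.
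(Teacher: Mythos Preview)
Your proof is correct and is exactly the argument the paper intends: the paper states only that the result is ``an immediate corollary of Theorems \ref{th2} and \ref{th3}'', and your write-up is the natural way to combine those two inequalities (taking $i=j=2$, substituting $m\mapsto m-d$ in the binomials, and adding). Your checks that $d\geq 1$ forces $c\geq 2$ so both theorems apply are on point.
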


If $n=2$,  we get the following upper bound which  is  smaller than $\dim\mathcal{M}_\Lie(\q)\leq \frac{1}{2}m(m+1)-d$,
but greater than the one obtained in Corollary \ref{corollary}, if $d> 2$. For $d=2$  they are the same,
and if $d=1$ then it is smaller than the one in Corollary \ref{corollary}. The following is in fact  the best upper bound for dimension of the Schur \Lie-multiplier of a \Lie-nilpotent  Leibniz $2$-algebra with one-dimensional \Lie-commutator so far.
However, the upper bound of Corollary \ref{corol3.15} is not
always smaller than the one in Proposition \ref{prop1}, for instance  when $n=3$, $d=1$ and $m\geq 10$.
\begin{corollary}\label{coro3.16}
   Let $\q$ be an $m$-dimensional \Lie-nilpotent  Leibniz algebra with $\dim\q_\Lie^2=d\geq 1$. Then
    $$\dim\mathcal{M}_\Lie(\q)\leq \frac{1}{2}(m^2-m -d^2).$$
\end{corollary}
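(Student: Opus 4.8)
The plan is to derive this inequality as the specialization to $n=2$ of Corollary \ref{corol3.15}, which already packages the combined force of Theorems \ref{th2} and \ref{th3}. First I would apply Corollary \ref{corol3.15} --- legitimate because the hypothesis $d\geq 1$ is precisely the one required there --- to obtain
$$\dim\mathcal{M}_\Lie(\q)\leq \frac{1}{2}\bigg[(m+d-1){m-d+n-2\choose n-1}-d\bigg].$$
Putting $n=2$ collapses the binomial coefficient, since ${m-d+n-2\choose n-1}={m-d\choose 1}=m-d$, so that the right-hand side becomes $\frac{1}{2}\big[(m+d-1)(m-d)-d\big]$.

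The next step is the purely algebraic simplification. Expanding the product gives $(m+d-1)(m-d)=m^2-d^2-m+d$, so the quantity inside the brackets equals $m^2-d^2-m+d-d=m^2-m-d^2$; dividing by $2$ then yields the asserted bound $\frac{1}{2}(m^2-m-d^2)$.

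The only point that demands care is the meaning of the symbol $m$. In Theorems \ref{th2} and \ref{th3} it stands for $\dim(\q/\q_\Lie^2)$, whereas in both corollaries $m$ denotes the full dimension $\dim\q$; these differ by $d$, but that shift has already been carried out in passing to Corollary \ref{corol3.15}, so it is inherited here for free. Consequently there is no genuine obstacle: once the correct corollary is invoked at $n=2$, the result is a one-line polynomial identity, and the interest of the statement lies not in its proof but in the comparison (noted just before it) showing that this is the sharpest available bound when $d=1$.
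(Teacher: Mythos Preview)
Your proof is correct and follows exactly the route intended by the paper: the corollary is introduced with ``If $n=2$, we get the following upper bound'' immediately after Corollary~\ref{corol3.15}, so the paper's own argument is precisely the specialization $n=2$ together with the algebraic simplification $(m+d-1)(m-d)-d=m^2-m-d^2$ that you carried out. Your remark about the shift in the meaning of $m$ between Theorems~\ref{th2}, \ref{th3} and the corollaries is accurate and already absorbed in Corollary~\ref{corol3.15}, so nothing further is needed.
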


Note that if $d=1$, then $\dim\mathcal{M}_\Lie(\q)\leq \frac{1}{2}m(m-1)-\frac{1}{2}$, and since $m(m-1)$ is always even, we get
$\dim\mathcal{M}_\Lie(\q)\leq \frac{1}{2}m(m-1)-1$.


\begin{example}\normalfont
Consider the 2-dimensional Leibniz algebra $\q=\langle x,y\rangle$ with the only non-zero bracket $[x,x]=y$.
It is easy to see that $\q^2_\Lie=Z_\Lie(\q)=\langle y\rangle$ and $\q$ is \Lie-nilpotent of class 2. By the above corollary
$\mathcal{M}_\Lie(\q)=0$. Observe that using the upper bound of \cite{s-b1} (Proposition \ref{prop1}), we get
$\dim\mathcal{M}_\Lie(\q)\leq 2$, and by Corollaries \ref{coro11}  or  \ref{corollary}, we get $\dim\mathcal{M}_\Lie(\q)\leq 1$.
\end{example}


In the next result, we discuss upper bounds for $\dim\mathcal{M}_\Lie(\q)$ in which $\q$ is \Lie-filiform and \Lie-nilpotent of maximal class,
respectively.

\begin{corollary}
$(i)$ If $\q$ is  an $m$-dimensional \Lie-filiform Leibniz $n$-algebra $(m > n)$,
then
\begin{eqnarray*}
\dim\mathcal{M}_\Lie(\q)\leq\left\{\begin{array}{ll}
\frac{m}{2}{2n-2\choose n-1}-1 &\ if \ m\leq 5,\\
\\
{2n-1\choose n}+{2n-2\choose n-1}-1 &\ if \ m\geq 6.
\end{array} \right.
\end{eqnarray*}
$(ii)$ If $\q$ is a finite dimensional \Lie-nilpotent  Leibniz $n$-algebra of maximal class, then $\dim\mathcal{M}_\Lie(\q)\leq1$.
\end{corollary}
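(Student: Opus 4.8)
The plan is to dispatch part $(ii)$ in one line and to prove part $(i)$ by iterating the reduction in Theorem \ref{th1}$(i)$. For part $(ii)$, if $\q$ is \Lie-nilpotent of maximal class with $\dim\q=m$, then taking $i=2$ in Definition \ref{def2} gives $\dim\q_\Lie^2=m-1=\dim\q-1$, so Corollary \ref{coro11} yields $\dim\mathcal{M}_\Lie(\q)\le 1$ at once.

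For part $(i)$, I would first record the structural facts behind Theorem \ref{th1}$(i)$: a \Lie-filiform $\q$ of dimension $m$ has $\dim Z_\Lie(\q)=1$ and $\dim(\q/\q_\Lie^2)=n$ (since $\dim\q_\Lie^2=m-n$ by Definition \ref{def1}), and, comparing the dimensions of the \Lie-lower central series, $\q/Z_\Lie(\q)$ is again \Lie-filiform of dimension $m-1$, degenerating to the $n$-dimensional \Lie-abelian algebra of Remark \ref{remark3.5} when $m=n+1$. Iterating the quotient by the \Lie-center and applying Theorem \ref{th1}$(i)$ at each stage gives $\dim\mathcal{M}_\Lie(\q)\le\dim\mathcal{M}_\Lie(\frac{\q}{Z_\Lie(\q)})+k$ with $-1\le k\le {2n-2\choose n-1}-1$, where $k+1=\dim\Ima\psi$ counts the independent \Lie-brackets $[s,f_{i_1},\ldots,f_{i_{n-1}}]_\Lie$ of a lift of the one-dimensional \Lie-center. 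This recursion is the engine of the whole argument, with Remark \ref{remark3.5} providing the base value ${2n-1\choose n}$ at the \Lie-abelian bottom.

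The two stated estimates correspond to two ways of controlling this increment. For the constant bound ${2n-1\choose n}+{2n-2\choose n-1}-1$, observe that one application of Theorem \ref{th1}$(i)$ to the \Lie-abelian bottom already yields this number for the $(n+1)$-dimensional \Lie-filiform algebra; the assertion for $m\ge 6$ is that this value \emph{persists}, i.e.\ that $\psi$ saturates ($k\le 0$) once $m$ is large. For the linear bound $\frac m2{2n-2\choose n-1}-1$, I would instead start from the base case $m=n+1$, where $\dim\q_\Lie^2=1$ and Corollary \ref{corol3.15}, together with the integrality of $\dim\mathcal{M}_\Lie$, gives exactly $\frac{n+1}{2}{2n-2\choose n-1}-1$, and then bound each increment by the refined count $\frac12{2n-2\choose n-1}={2n-3\choose n-1}$, the number of non-decreasing length-$(n-1)$ sequences from only $n-1$ of the $n$ \Lie-generators, the brackets involving the distinguished descending generator being forced by the lower relations. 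Summing from the base reproduces $\frac m2{2n-2\choose n-1}-1$, and the reported bound is the smaller of the two, which is the linear one precisely when $m\le 5$.

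The hard part will be the saturation behind the $m\ge 6$ case: proving $\dim\Ima\psi\le 1$ for large $m$ amounts to showing that the deep \Lie-brackets of a lift of the \Lie-center become linearly dependent modulo $[\R,\f,\ldots,\f]_\Lie$, which requires an analysis of the rigid \Lie-filiform multiplication rather than the generic counting used in Theorem \ref{th1}$(i)$. The refined increment $\frac12{2n-2\choose n-1}$ in the linear regime is the secondary technical point; I would pin both of them down by working with the explicit multiplication of the model \Lie-filiform algebra (cf.\ Example \ref{ex1}) and by checking the small values $m\le 5$ directly to anchor the base cases.
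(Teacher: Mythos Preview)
Your treatment of part $(ii)$ is correct and coincides with the paper's: Definition \ref{def2} gives $\dim\q_\Lie^2=\dim\q-1$, and Corollary \ref{coro11} finishes.

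For part $(i)$, however, your plan has two genuine gaps, and both are places you yourself flag as ``hard'' or ``secondary technical points'' without indicating how to close them. First, the bound ${2n-1\choose n}+{2n-2\choose n-1}-1$ for $m\ge 6$ in your scheme rests on the claim that the map $\psi$ of Theorem \ref{th1}$(i)$ eventually ``saturates'' to $\dim\Ima\psi\le 1$. Nothing in the paper supports this, and there is no reason to expect it: Theorem \ref{th1}$(i)$ only ever gives $k\le{2n-2\choose n-1}-1$, and no sharper generic bound on $\Ima\psi$ is available without structural information about the specific \Lie-filiform algebra. Second, your linear bound hinges on the ``refined increment'' ${2n-3\choose n-1}=\frac12{2n-2\choose n-1}$, justified by the heuristic that brackets involving a ``distinguished descending generator'' are forced. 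This is not a proof; the \Lie-brackets $[s,f_{i_1},\ldots,f_{i_{n-1}}]_\Lie$ live in $[\s,\f,\ldots,\f]_\Lie/[\R,\f,\ldots,\f]_\Lie$, and you would need concrete relations in the free Leibniz $n$-algebra, not just in $\q$, to cut the count in half.

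The paper bypasses both difficulties by a completely different and much shorter route. Since $\q$ is \Lie-filiform, $\dim(\q/\q_\Lie^2)=n$ and $\dim\q_\Lie^{m-n+1}=1$. Plugging $i=2$, $j=m-n+1$ into Theorem \ref{th2} (with the parameter ``$m$'' there equal to $n$) gives
\[
\dim\mathcal{M}_\Lie(\q)\le\dim\mathcal{M}_\Lie\!\Big(\frac{\q}{\q_\Lie^2}\Big)+{2n-2\choose n-1}-1.
\]
Now Proposition \ref{prop1} applied to the $n$-dimensional \Lie-abelian quotient $\q/\q_\Lie^2$ yields $\dim\mathcal{M}_\Lie(\q/\q_\Lie^2)\le{2n-1\choose n}$, giving the second displayed bound for all $m>n$. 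For the first, add the inequality above to Theorem \ref{th3} (again with parameter $n$), namely $\dim\mathcal{M}_\Lie(\q)+\dim\mathcal{M}_\Lie(\q/\q_\Lie^2)\le(m-1){2n-2\choose n-1}$, to obtain $2\dim\mathcal{M}_\Lie(\q)\le m{2n-2\choose n-1}-1$; parity of ${2n-2\choose n-1}$ then gives $\frac m2{2n-2\choose n-1}-1$. Finally, one checks via Pascal's identity which of the two bounds is smaller, and the crossover is at $m=5$ (with equality only at $m=5$, $n=2$). No iteration, no saturation, no refined increment is needed.
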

\begin{proof}
(i) Considering  Definition \ref{def1}, and   putting  $i=2$ and $j=m-n+1$  in Theorem \ref{th2},   we get
\begin{equation}\label{eq3.15}
\dim\mathcal{M}_\Lie(\q)\leq\dim\mathcal{M}_\Lie(\frac{\q}{\q_{\Lie}^2})+ {2n-2\choose n-1}-1.
\end{equation}
Now using this inequality and Theorem \ref{th3},  we have
    $\dim\mathcal{M}_\Lie(\q)\leq \frac{1}{2}\big{[}m{2n-2\choose n-1}-1\big{]},$
     and again since ${2n-2\choose n-1}$ is even for all $n\geq2$,
so $\dim\mathcal{M}_\Lie(\q)\leq \frac{m}{2}{2n-2\choose n-1}-1$.

On the other hand, inequality (\ref{eq3.15})  and Proposition \ref{prop1}  imply that
$\dim\mathcal{M}_\Lie(\q)\leq {2n-1\choose n}+ {2n-2\choose n-1}-1.$
Therefore,
\[\dim\mathcal{M}_\Lie(\q) \leq\min\bigg{\{} \frac{m}{2}{2n-2\choose n-1}-1 , \ {2n-1\choose n}+{2n-2\choose n-1}-1 \bigg{\}}.\]
Using the Pascal's identity ${2n-1\choose n}={2n-2\choose n}+{2n-2\choose n-1}$,
 one can easily check that if $m\leq 5$,  the first upper bound is less than or equal to the other one
 (equality occurs only when $m=5$ and $n=2$), and if $m\geq 6$, the second one
  is smaller.

(ii) It follows from Definition \ref{def2} and Corollary \ref{coro11}. \ \ \ $\blacksquare$
\end{proof}

In the case  $n=2$, we have:

\begin{corollary}
 If $\q$ is  an $m$-dimensional \Lie-filiform Leibniz algebra $(m>2)$,
then
\begin{eqnarray*}
\dim\mathcal{M}_\Lie(\q)\leq\left\{\begin{array}{ll}
m-1 &\ if \ m\leq 5,\\
4 &\ if \ m\geq 6.
\end{array} \right.
\end{eqnarray*}
Also if $m=2$, then $\dim\mathcal{M}_\Lie(\q)=3$.
\end{corollary}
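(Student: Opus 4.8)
The plan is to obtain both assertions as direct specializations of results already proved, so that almost no new argument is required beyond evaluating a few binomial coefficients. First I would dispose of the range $m>2$ by setting $n=2$ in the preceding corollary. The two binomial coefficients appearing there collapse to small integers, namely ${2n-2\choose n-1}={2\choose 1}=2$ and ${2n-1\choose n}={3\choose 2}=3$. Substituting these, the first bound $\frac{m}{2}{2n-2\choose n-1}-1$ becomes $\frac{m}{2}\cdot 2-1=m-1$, valid for $m\leq 5$, while the second bound ${2n-1\choose n}+{2n-2\choose n-1}-1$ becomes $3+2-1=4$, valid for $m\geq 6$. This reproduces the stated piecewise inequality on the whole range $3\leq m\leq 5$ and $m\geq 6$, since the comparison of the two expressions (with the switch occurring between $m=5$ and $m=6$) was already settled inside the preceding corollary.

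The only case lying outside the hypothesis $m>n$ of the general corollary is $m=2$, which I would treat directly. For $n=2$ and $m=2$, Definition \ref{def1} requires $\dim\q_\Lie^i=m-n+2-i=2-i$ for $2\leq i\leq 2$, forcing $\dim\q_\Lie^2=0$; that is, $\q$ is \Lie-abelian. Theorem \ref{theorem2.4} then applies and gives the exact value $\dim\mathcal{M}_\Lie(\q)=\frac{1}{2}m(m+1)$ for a \Lie-abelian Leibniz $2$-algebra, and evaluating at $m=2$ yields $\frac{1}{2}\cdot 2\cdot 3=3$, which is the claimed equality.

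There is no substantial obstacle here; the work is routine specialization together with one boundary computation. The point requiring the most care is precisely the degenerate case $m=2$: one must notice that the \Lie-filiform condition of Definition \ref{def1} does not vanish but rather collapses to \Lie-abelianness when $m=n$, and then invoke the equality half of Theorem \ref{theorem2.4} (not merely its inequality) in order to pin down the value $3$ rather than an upper bound. Everything else reduces to evaluating ${2\choose 1}$ and ${3\choose 2}$ and to the regime comparison already carried out in the preceding corollary.
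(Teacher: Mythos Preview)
Your proposal is correct and follows essentially the same approach as the paper: the case $m>2$ is a direct specialization of the preceding corollary with $n=2$, and the case $m=2$ is handled by observing (via Definition \ref{def1}, which is the content of Remark \ref{remark3.5}) that $\q$ is \Lie-abelian and then applying the equality part of Theorem \ref{theorem2.4}. The paper's own proof is even terser, citing Remark \ref{remark3.5} and Theorem \ref{theorem2.4} for $m=2$ and leaving the rest implicit.
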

\begin{proof}
If $m=2$, then $\q$ is \Lie-abelian (Remark \ref{remark3.5}), and by Theorem \ref{theorem2.4} we get $\dim\mathcal{M}_\Lie(\q)=3$. \ \ \ $\blacksquare$
\end{proof}

The following example shows that the above  corollary provides a better  upper bound than the previous results for \Lie-filiform Leibniz algebras.
Note that  for a \Lie-filiform Leibniz algebra of dimension $3$ (in this case $\dim\q^2_\Lie=1$), the above corollary and Corollary \ref{coro3.16}
give the same upper bound $\dim\mathcal{M}_\Lie(\q)\leq 2$. According to Definition \ref{def1}, for \Lie-filiform Leibniz algebras if $m\geq 4$, then $\dim\q^2_\Lie\geq 2$.
 So, the bound $\dim\mathcal{M}_\Lie(\q)\leq \frac{1}{2}m(m-1)-1$
 in Corollary \ref{coro3.16} is still the best upper bound ever presented, if $\q$ is a \Lie-nilpotent Leibniz algebra with $\dim\q^2_\Lie=1$.

\begin{example}\normalfont
Consider the 4-dimensional Leibniz algebra $\q=\langle x_1,x_2,x_3,x_4\rangle$ with  non-zero brackets
 $[x_1,x_1]=x_3$, $[x_1,x_2]=x_4$, $[x_2,x_1]=x_3$ and $[x_3,x_1]=x_4$ (see \cite[Theorem 3.2]{alb}).
 One can easily check that $\q^2_\Lie=\langle x_3,x_4\rangle$, $\q^3_\Lie=\langle x_4\rangle$ and $\q^4_\Lie=0$, i.e.
 $\q$ is  \Lie-filiform. So, by the above corollary $\dim\mathcal{M}_\Lie(\q)\leq 3$, while using
  Corollaries \ref{corollary} or \ref{coro3.16},
  we get $\dim\mathcal{M}_\Lie(\q)\leq 4$, and by Proposition \ref{prop1}, $\dim\mathcal{M}_\Lie(\q)\leq 8$.
\end{example}


In Theorems \ref{th2} and \ref{th3}, induction is used on the dimension of $\q$ while in the next, following \cite{yan},
 it is used on the \Lie-nilpotency class of $\q$. Moreover, the following result  improves Theorem \ref{th2}.

\begin{theorem}
Let $\q$ be a finite dimensional \Lie-nilpotent  Leibniz $n$-algebra of class $c$ with  $\dim\dfrac{\q/Z_\Lie(\q)}{(\q/Z_\Lie(\q))_\Lie^2}=m$.
  Then for every  $i\geq 2$ and $2\leq j\leq c$
\[\dim\mathcal{M}_\Lie(\q)\leq\dim\mathcal{M}_\Lie(\frac{\q}{\q_{\Lie}^i})+\dim\q_{\Lie}^{j} \bigg{[}{m+n-2\choose n-1}-1\bigg{]}.\]
\end{theorem}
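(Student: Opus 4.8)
The plan is to follow the proof of Theorem~\ref{th2} almost verbatim, altering two ingredients: the one-step inequality is sharpened so that the binomial coefficient is governed by the smaller parameter $m=\dim\frac{\q/Z_\Lie(\q)}{(\q/Z_\Lie(\q))_\Lie^2}$, and the induction is organised along the \Lie-nilpotency class, following \cite{yan}. First I record that $(\q/Z_\Lie(\q))_\Lie^2=(\q_\Lie^2+Z_\Lie(\q))/Z_\Lie(\q)$, so that $m=\dim\q/(\q_\Lie^2+Z_\Lie(\q))\leq\dim(\q/\q_\Lie^2)$, the gap being $\dim\frac{Z_\Lie(\q)}{Z_\Lie(\q)\cap\q_\Lie^2}$; it is exactly this saving that yields the improvement over Theorem~\ref{th2}.

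The heart of the proof is a sharpened form of the inequality (\ref{ineq1}). Fix a free presentation $0\rightarrow\R\rightarrow\f\rightarrow\q\rightarrow0$ with canonical projection $\pi:\f\to\q$, a one-dimensional ideal $\I\subseteq\q_\Lie^c\subseteq Z_\Lie(\q)$, the associated ideal $\s=\pi^{-1}(\I)$ of $\f$, and a lift $s_0$ of a generator of $\I$. Applying Lemma~\ref{lem1}~(ii) and using $\dim(\I\cap\q_\Lie^2)=1$, it remains only to bound $\dim\frac{[\s,\f,\ldots,\f]_\Lie}{[\R,\f,\ldots,\f]_\Lie}$. As in Theorem~\ref{th1}~(i), this quotient is spanned by the classes of the brackets $[s_0,f_1,\ldots,f_{n-1}]_\Lie$, and the symmetry of the \Lie-bracket already allows the $f_k$ to be taken from a fixed basis of $\q/\q_\Lie^2$. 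The new input is that the $f_k$ may in fact be chosen modulo $\q_\Lie^2+Z_\Lie(\q)$, i.e.\ from a fixed basis of a space of dimension $m$. Granting this, the admissible brackets are indexed by the non-decreasing sequences of length $n-1$ on $m$ symbols, of which there are ${m+n-2\choose n-1}$; hence Lemma~\ref{lem1}~(ii) gives
\[\dim\mathcal{M}_\Lie(\q)\leq\dim\mathcal{M}_\Lie(\frac{\q}{\I})+{m+n-2\choose n-1}-1.\]

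The main obstacle is precisely the extra vanishing just used: that $[s_0,z,f_2,\ldots,f_{n-1}]_\Lie\in[\R,\f,\ldots,\f]_\Lie$ whenever $\pi(z)\in Z_\Lie(\q)$. I would establish it as the exact dual of the reduction modulo $\q_\Lie^2$ from Theorem~\ref{th1}~(i): there one expands a factor lying in $\f_\Lie^2$ and uses the centrality of $\pi(s_0)$, whereas here one expands $s_0$ itself and uses the centrality of $\pi(z)$. Indeed, since $c\geq2$ we have $\I\subseteq\q_\Lie^c\subseteq\q_\Lie^2$, so by linearity $s_0$ may be taken to be a single \Lie-bracket $s_0=[a_1,\ldots,a_n]_\Lie$ (replacing $s_0$ by an element of $\R$ only moves the expression within $[\R,\f,\ldots,\f]_\Lie$). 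Expanding the outer bracket $[[a_1,\ldots,a_n]_\Lie,z,f_2,\ldots,f_{n-1}]_\Lie$ through the Leibniz $n$-algebra identity yields a sum of terms in each of which $z$ sits inside an inner bracket $[a_i,z,f_2,\ldots,f_{n-1}]_\Lie$; as $\pi(z)$ is \Lie-central, each such inner bracket projects to $0$ in $\q$ and therefore lies in $\R$, placing every term, and hence the whole sum, in $[\R,\f,\ldots,\f]_\Lie$. The delicate point is the bookkeeping of the symmetrisation, namely ensuring that $z$ remains inside an inner bracket in every summand; this runs entirely parallel to the corresponding computation for $\q_\Lie^2$ in \cite{s-b1}, and is where I expect the real work to lie.

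It remains to pass from this one-step bound to the stated inequality, which I would do by induction reducing through the same central ideal $\I$, exactly as in Theorem~\ref{th2}. One has $(\q/\I)/(\q/\I)_\Lie^i=\q/\q_\Lie^i$ and $\dim(\q/\I)_\Lie^j=\dim\q_\Lie^j-1$, since $\I\subseteq\q_\Lie^c\subseteq\q_\Lie^j$; moreover, writing $m'$ for the parameter attached to $\q/\I$, one has $m'\leq m$, because $Z_\Lie(\q)/\I\subseteq Z_\Lie(\q/\I)$ forces $\frac{\q/\I}{(\q/\I)_\Lie^2+Z_\Lie(\q/\I)}$ to be a quotient of $\q/(\q_\Lie^2+Z_\Lie(\q))$. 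Feeding the inductive hypothesis for $\q/\I$ into the sharpened inequality above and invoking the monotonicity of $r\mapsto{r+n-2\choose n-1}$, the terms telescope to $\dim\mathcal{M}_\Lie(\q/\q_\Lie^i)+\dim\q_\Lie^j\big[{m+n-2\choose n-1}-1\big]$, exactly as the analogous step closes the proof of Theorem~\ref{th2}.
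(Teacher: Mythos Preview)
Your route is genuinely different from the paper's. The paper does \emph{not} sharpen the one-step inequality (\ref{ineq1}); instead it inducts on the \Lie-nilpotency class, reducing $\q$ to $\q/\q_\Lie^c$ in one stroke via an inequality
\[
\dim\mathcal{M}_\Lie(\q)\leq\dim\mathcal{M}_\Lie(\q/\q_\Lie^c)+\dim\q_\Lie^c\Big[\tbinom{m''+n-2}{n-1}-1\Big],
\qquad m''=\dim\big(\q/Z_{c-1}^\Lie(\q)\big),
\]
adapted from \cite{esh}, and then uses $\q_\Lie^2+Z_\Lie(\q)\subseteq Z_{c-1}^\Lie(\q)$ to replace $m''$ by $m$. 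Your proposal keeps the dimension-induction of Theorem~\ref{th2} and tries to get the parameter $m$ directly in the one-step bound. If that one-step bound holds, your telescoping and the check $m'\leq m$ are fine, and the argument is arguably cleaner than the paper's since it never introduces the upper \Lie-central series.

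The problem is your justification of the key vanishing $[s_0,z,f_2,\ldots,f_{n-1}]_\Lie\in[\R,\f,\ldots,\f]_\Lie$. Your claim that the expansion places $z$ inside an inner \emph{\Lie}-bracket $[a_i,z,f_2,\ldots,f_{n-1}]_\Lie$ is false already for $n=2$: a direct computation gives
\[
[[a,b]_\Lie,z]_\Lie=[a,[b,z]]_\Lie+[b,[a,z]]_\Lie,
\]
with \emph{ordinary} inner brackets $[a,z]$, $[b,z]$. \Lie-centrality of $\pi(z)$ only forces $[a,z]_\Lie\in\R$, not $[a,z]\in\R$, so your sentence ``each such inner bracket projects to $0$ in $\q$ and therefore lies in $\R$'' does not go through. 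This is not merely bookkeeping of the symmetrisation; your expansion uses only $s_0\in\q_\Lie^2$, whereas the paper's argument exploits the full depth $s_0\in\q_\Lie^c$ via $Z_{c-1}^\Lie(\q)$. A way to rescue your approach without the upper \Lie-central series is to observe that, by symmetry, $[s_0,z,f_2,\ldots,f_{n-1}]_\Lie=[z,s_0,f_2,\ldots,f_{n-1}]_\Lie$ and then invoke the well-definedness underlying Lemma~\ref{lem1}(iv) with the one-dimensional \Lie-central ideal generated by $\pi(z)$ (so the roles of ``\Lie-central first argument'' and ``argument in $\q_\Lie^2$'' are swapped). That is the honest ``dual'' of the $\q_\Lie^2$-reduction you allude to, but it is not the computation you sketched.
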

\begin{proof}
If $c=1$ or $i>c$, then the result holds.
 Assume that $2\leq i\leq c$  and
the result holds for \Lie-nilpotent Leibniz $n$-algebras of class less than $c$.
Clearly, $0\not=\q_{\Lie}^c\subseteq Z_\Lie(\q)$, \  $(\q/\q_{\Lie}^c)_\Lie^i=\q_{\Lie}^i/\q_{\Lie}^c$ and
$Z_\Lie(\q)/\q_{\Lie}^c\subseteq Z_\Lie(\q/\q_\Lie^c)$. Put $A=\dfrac{\q/\q_\Lie^c}{Z_\Lie(\q/\q_\Lie^c)}$,
\ $B=\q/Z_\Lie(\q)=\dfrac{\q/\q_\Lie^c}{Z_\Lie(\q)/\q_\Lie^c}$ and $\dim(A/A_\Lie^2)=m'$. It is easy to check that $m'\leq \dim(B/B_\Lie^2)=m$.
By induction hypothesis
\begin{eqnarray}\label{ineq3}
\dim\mathcal{M}_\Lie(\frac{\q}{\q_{\Lie}^c})&\leq&\dim\mathcal{M}_\Lie(\frac{\q/\q_{\Lie}^c}{\q_{\Lie}^i/\q_{\Lie}^c})  +
\dim\frac{\q_{\Lie}^j}{\q_{\Lie}^c}
\bigg{[}{m'+n-2\choose n-1}-1\bigg{]}\nonumber\\
&\leq&\dim\mathcal{M}_\Lie(\frac{\q}{\q_{\Lie}^i})  +
\dim\frac{\q_{\Lie}^j}{\q_{\Lie}^c}
\bigg{[}{m+n-2\choose n-1}-1\bigg{]}.
\end{eqnarray}
Now, similar to \cite[Theorem 3.6]{esh} and the technique used earlier, one can show that
\begin{eqnarray}\label{ineq4}
\dim\mathcal{M}_\Lie(\q)\leq\dim\mathcal{M}_\Lie(\frac{\q}{\q_{\Lie}^c})+\dim\q_{\Lie}^c \bigg{[}{m''+n-2\choose n-1}-1\bigg{]},
\end{eqnarray}
where $m''=\dim(\q/Z^\Lie_{c-1}(\q))$ and $Z^\Lie_{i}(\q)$ is the $i$th term
of the upper \Lie-central series of $\q$  (see Definition 10 and Theorem 4 of \cite{c-k}).
Since $\q_\Lie^2\subseteq Z^\Lie_{c-1}(\q)$, hence
\begin{eqnarray}\label{ineq5}
m''=\dim(\q/Z^\Lie_{c-1}(\q))\leq \dim(\q/(\q_\Lie^2 + Z_\Lie(\q)))=\dim(B/B_\Lie^2)=m.
\end{eqnarray}
Thus  inequalities (\ref{ineq3}), (\ref{ineq4}) and (\ref{ineq5}) imply that
\begin{eqnarray*}
\dim\mathcal{M}_\Lie(\q)&\leq&\dim\mathcal{M}_\Lie(\frac{\q}{\q_{\Lie}^i})  +
\dim\frac{\q_{\Lie}^j}{\q_{\Lie}^c}
\bigg{[}{m+n-2\choose n-1}-1\bigg{]}\\
&+&\dim\q_{\Lie}^c \bigg{[}{m''+n-2\choose n-1}-1\bigg{]}\\
&\leq& \dim\mathcal{M}_\Lie(\frac{\q}{\q_{\Lie}^i})  +
\dim\frac{\q_{\Lie}^j}{\q_{\Lie}^c}
\bigg{[}{m+n-2\choose n-1}-1\bigg{]}\\
&+&\dim\q_{\Lie}^c  \bigg{[}{m+n-2\choose n-1}-1\bigg{]}\\
&\leq&\dim\mathcal{M}_\Lie(\frac{\q}{\q_{\Lie}^i})  +
\dim\q_{\Lie}^j
\bigg{[}{m+n-2\choose n-1}-1\bigg{]},
\end{eqnarray*}
which completes the proof. \ \ \ $\blacksquare$
\end{proof}


 {\bf Narcisse G. Bell Bogmis}\\
 Department of Mathematics, Faculty of Sciences, University of Dschang,
 Dschang, Cameroon.\\
E-mail address:  bellnarcisse3@gmail.com \\

{\bf Guy R. Biyogmam} \\
Department of Mathematics, Georgia College \& State University, Milledgeville, GA, USA.\\
E-mail address:  guy.biyogmam@gcsu.edu \\

{\bf Hesam Safa} \\
Department of Mathematics, Faculty of Basic Sciences, University of Bojnord, Bojnord, Iran. \\
E-mail address:  hesam.safa@gmail.com, \ h.safa@ub.ac.ir  \\

 {\bf  Calvin Tcheka}\\
 Department of Mathematics, Faculty of Sciences, University of Dschang,
 Dschang, Cameroon.\\
E-mail address:  calvin.tcheka@univ-dschang.org \\

\end{document}